\theoremstyle{plain}
\newtheorem{theorem}{\indent\sc Theorem}[section]\newtheorem{lemma}[theorem]{\indent\sc Lemma}\newtheorem{corollary}[theorem]{\indent\sc Corollary}
\theoremstyle{definition}\newtheorem{definition}[theorem]{\indent\sc Definition}\newtheorem{remark}[theorem]{\indent\sc Remark}
\def\address#1#2{\begingroup\noindent\parbox[t]{7.8cm}{\small{\scshape\ignorespaces#1}\par\vskip1ex\noindent\small{\itshape E-mail}\/: #2\par\vskip4ex}\hfill\endgroup}
\newcommand{\hfl}[2]{\smash{\mathop{\hbox to 9
mm{\rightarrowfill}}\limits^{\scriptstyle #1}_{\scriptstyle #2}}}
\title{\uppercase{On Gras conjecture for imaginary quadratic fields}}
\author{\textsc{Hassan OUKHABA \& St\'ephane VIGUI\'E}}
\begin{document}
\maketitle
\begin{abstract}\footnote[1]{2000
\textit{Mathematics Subject Classification. Primary 11R27, 11R29, Secondary 11G16}}
\footnote[2]{\textit Key words. Elliptic units, Stark units, Gras conjecture, Euler systems.}
In this paper we extend methods of Rubin to prove the Gras conjecture for abelian extensions of a given imaginary quadratic field $k$ and prime numbers $p$ which divide the number of roots of unity in $k$
\end{abstract}
\bibliographystyle{plain}
\section{Introduction}\label{intro}
Let $k\subset\mathbb{C}$ be an imaginary quadratic field, and let $\mathcal{O}_k$ be the ring of integers of $k$. Let $H\subset\mathbb{C}$ be the Hilbert class field of $k$. Let $K\subset\mathbb{C}$ be a finite abelian extension of $k$, and write $\mathrm{G}$ for the Galois group of $K/k$, $\mathrm{G}:=\mathrm{Gal}(K/k)$. Let $\mathcal{O}_K$ and $\mathcal{O}_K^\times$ be the ring of integers of $K$ and the group of units of $\mathcal{O}_K$, respectively. In \cite[Theorem 3.3]{Ru91} Rubin applied the technique of Euler systems to prove the Gras conjecture for $K$, when $H\subset K$ and for all prime number $p$, $p\nmid w_H[K:k]$, where $w_H$ is the number of roots of unity in $H$. Soon after he generalized his result in \cite[Theorem 1]{Ru94} to all $K$ (that is, without the assumption $H\subset K$) and all $p$, $p\nmid w_k[K:k]$, where $w_k$ is the number of roots of unity in $k$. The Gras conjecture is a very subtle information about the ideal class group, used for example in \cite[proof of Theorem 10.3]{Ru91} as the last step in the direction of the main conjecture.\par
In this paper we complete the result of Rubin. Indeed, we prove the Gras conjecture for those prime numbers $p\,\vert w_k$ and $p\nmid[K:k]$. As a first step, we prove a weak form of the Gras conjecture for every prime number $p\nmid[K:k]$. More precisely, Let $\mathcal{E}_K$ be the group of units of $K$ defined in the following section (see the end of section \ref{ES} for a description of the elements of $\mathcal{E}_K$ as elliptic units, and also for a comparison with the group of elliptic units considered by Rubin in \cite{Ru91} and in \cite{Ru94}). Let $\mathrm{Cl}(K)$ be the ideal class group of $K$, and let $\mathrm{g}:=[K:k]$. Then, by applying the elementary approach used in \cite{Vig} (hence without using Euler systems), we prove the following formula for every nontrivial irreducible rational character $\Psi$ of $\mathrm{G}$.
\begin{equation}\label{indice2}
\bigl[e_{\Psi}\bigl(\mathbb{Z}[\mathrm{g}^{-1}]\otimes_{\mathbb{Z}}\mathcal{O}_K^\times\bigr) :e_{\Psi}\bigl(\mathbb{Z}[\mathrm{g}^{-1}]\otimes_{\mathbb{Z}}\mathcal{E}_K\bigr)\bigr]=\#[e_{\Psi}\bigl(\mathbb{Z}[\mathrm{g}^{-1}]\otimes_{\mathbb{Z}}\mathrm{Cl}(K)\bigr)],
\end{equation}  
where $e_\Psi$ is the idempotent of $\mathbb{Z}[\mathrm{g}^{-1}][\mathrm{G}]$ associated to $\Psi$. By $\# X$ we mean the cardinality of the finite set $X$. Of course the formula (\ref{indice2}) is already known if we replace $\mathrm{g}$ by $w_k\mathrm{g}$. This is a consequence of \cite[Theorem 1]{Ru94}. Let us remark that the product of (\ref{indice2}) for all the nontrivial irreducible rational characters of $\mathrm{G}$ yields the equality
\begin{equation}\label{indice2prime}
\bigl[\mathbb{Z}[\mathrm{g}^{-1}]\otimes_{\mathbb{Z}}\mathcal{O}_K^\times :\mathbb{Z}[\mathrm{g}^{-1}]\otimes_{\mathbb{Z}}\mathcal{E}_K\bigr]=\frac{\#\bigl(\mathbb{Z}[\mathrm{g}^{-1}]\otimes_{\mathbb{Z}}\mathrm{Cl}(K)\bigr)}{\#\bigl(\mathbb{Z}[\mathrm{g}^{-1}]\otimes_{\mathbb{Z}}\mathrm{Cl}(k)\bigr)},
\end{equation}
which is known since a long time. Indeed, (\ref{indice2prime}) is a straightforward consequence of \cite[Th\'eor\`eme 5]{Gi79}. It was the ultimate ingredient used by Rubin to prove the Gras conjecture.\par
In the two last sections \ref{ES} and \ref{GC}, we define our Euler systems for $p\,\vert w_k$ and establish all the results needed to apply them to the $p$-part of the ideal class group of $K$. Their application gives Theorem \ref{casfacile}, which, by the help of (\ref{indice2}) (in fact (\ref{indice2prime}) is sufficient), implies  
\begin{theorem}\label{tresgras} Let $p$ be a prime number such that $p\,\vert w_k$ and $p\nmid [K:k]$. Let $\chi$ be a nontrivial irreducible $\mathbb{Z}_p$-character of $\mathrm{G}$. Then
\begin{equation}\label{gras}
\bigl[e_{\chi}\bigl(\mathbb{Z}_p\otimes_{\mathbb{Z}}\mathcal{O}_K^\times\bigr) :e_{\chi}\bigl(\mathbb{Z}_p\otimes_{\mathbb{Z}}\mathcal{E}_K\bigr)\bigr]=\#[e_{\chi}\bigl(\mathbb{Z}_p\otimes_{\mathbb{Z}}\mathrm{Cl}(K)\bigr)],
\end{equation}
where $e_\chi$ is the idempotent of $\mathbb{Z}_p[\mathrm{G}]$ associated to $\chi$.
\end{theorem}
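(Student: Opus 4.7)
The plan is to deduce Theorem \ref{tresgras} from two ingredients both in place by the time the statement appears: the Euler system upper bound formulated as Theorem \ref{casfacile} of section \ref{GC}, and the weak Gras formula (\ref{indice2}) obtained by the elementary method of \cite{Vig}. The strategy is classical and perfectly suited to this situation: the Euler system will provide one divisibility, character by character, while (\ref{indice2}) will provide a \emph{global} equality for the coarser rational character $\Psi$ lying below $\chi$; together they force an equality for each individual $\mathbb{Z}_p$-character $\chi$.

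The first step is to exploit the hypothesis $p \nmid \mathrm{g} = [K:k]$: the group algebra $\mathbb{Z}_p[\mathrm{G}]$ is then a product of discrete valuation rings indexed by the irreducible $\mathbb{Z}_p$-characters of $\mathrm{G}$, and for each rational character $\Psi$ one has $e_\Psi = \sum_{\chi \mid \Psi} e_\chi$ in $\mathbb{Z}_p[\mathrm{G}]$, the sum running over those irreducible $\mathbb{Z}_p$-characters whose $\overline{\mathbb{Q}}_p$-constituents lie in $\Psi$. Consequently $e_\Psi M = \bigoplus_{\chi \mid \Psi} e_\chi M$ for every finitely generated $\mathbb{Z}_p[\mathrm{G}]$-module $M$. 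Applied to the three modules of interest, this identity transforms the $\mathbb{Z}_p$-tensored form of (\ref{indice2}) into
\[
\prod_{\chi \mid \Psi} \bigl[e_\chi(\mathbb{Z}_p \otimes \mathcal{O}_K^\times) : e_\chi(\mathbb{Z}_p \otimes \mathcal{E}_K)\bigr] \;=\; \prod_{\chi \mid \Psi} \#\bigl[e_\chi(\mathbb{Z}_p \otimes \mathrm{Cl}(K))\bigr]
\]
for every nontrivial rational character $\Psi$ of $\mathrm{G}$.

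Next I would invoke Theorem \ref{casfacile}, the output of the Euler system constructed in section \ref{ES} for the delicate case $p \mid w_k$; it will provide, for each nontrivial irreducible $\mathbb{Z}_p$-character $\chi$, the divisibility $\#[e_\chi(\mathbb{Z}_p \otimes \mathrm{Cl}(K))] \mid [e_\chi(\mathbb{Z}_p \otimes \mathcal{O}_K^\times) : e_\chi(\mathbb{Z}_p \otimes \mathcal{E}_K)]$. Because the two products above have factors satisfying this divisibility term-by-term, the global equality of products forces an equality factor by factor, which is exactly (\ref{gras}) since every nontrivial $\chi$ lies above some nontrivial rational character. The real work, and the main obstacle of the paper, lies entirely upstream in Theorem \ref{casfacile}: when $p \mid w_k$ the standard trick of introducing auxiliary $p$-power roots of unity to build Kolyvagin derivatives from elliptic units breaks down, which is precisely why Rubin's arguments in \cite{Ru91,Ru94} required the exclusion $p \nmid w_k$. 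Producing a workable system of elliptic units and verifying the requisite distribution and congruence relations in sections \ref{ES}--\ref{GC} is the technical core of the paper; once that input is secured, the deduction above is purely formal bookkeeping with idempotents, made possible by the semisimplicity of $\mathbb{Z}_p[\mathrm{G}]$.
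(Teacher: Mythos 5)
Your proposal matches the paper's own proof: both deduce the equality for a single nontrivial $\mathbb{Z}_p$-character $\chi$ by decomposing $e_\Psi$ (for the rational $\Psi$ above $\chi$) into the $e_{\chi'}$ with $\chi'\vert\Psi$, rewriting (\ref{indice2}) as a product over those $\chi'$, and then using the Euler-system divisibility of Theorem \ref{casfacile} term by term to force the product equality to hold factorwise. The only cosmetic remark is that $\mathbb{Z}_p[\mathrm{G}]$ for $p\nmid\mathrm{g}$ is a finite product of unramified discrete valuation rings rather than a semisimple ring in the classical sense, but that is precisely the decomposition you use, so the argument is sound.
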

In a forthcoming paper we shall apply Theorem \ref{tresgras} to the main conjecture for prime numbers $p$, $p\,\vert w_k$.
\section{The group $\mathcal{E}_K$}\label{the group}
It is well known that Stark conjectures are satisfied for abelian extensions of imaginary quadratic fields. Moreover, the Stark units are constructed by using appropriate elliptic units. On the other hand, the groups of elliptic units are generated by the norms of these Stark units. We explain this below.\par
For each nonzero ideal $\mathfrak{m}$ of $\mathcal{O}_k$, we denote by $H_{\mathfrak{m}}\subset\mathbb{C}$ the ray class field of $k$ modulo $\mathfrak{m}$. Suppose $\mathfrak{m}\not\in\{(0),\mathcal{O}_k\}$ then Stark proved in \cite{St80} the existence of an element $\varepsilon=\varepsilon_{\mathfrak{m}}\in H_{\mathfrak{m}}$ caracterized, up to a root of unity, by the following three properties
\begin{itemize}
\item[(i)] Let $w_{\mathfrak{m}}$ be the number of roots of unity in $H_{\mathfrak{m}}$.
Then the extension $H_{\mathfrak{m}}(\varepsilon^{1/w_{\mathfrak{m}}})/k$ is abelian.
\item[(ii)] If $\mathfrak{m}$ is divisible by two prime ideals then
$\varepsilon$ is a unit of $\mathcal{O}_{H_{\mathfrak{m}}}$. If $\mathfrak{m}=\mathfrak{q}^e$, where $\mathfrak{q}$ is a prime ideal then 
\begin{equation*}
\varepsilon\mathcal{O}_{H_{\mathfrak{m}}}=(\mathfrak{q})_{\mathfrak{m}}^{\frac{w_{\mathfrak{m}}}{w_k}}\end{equation*}
where $(\mathfrak{q})_{\mathfrak{m}}$ is the product of the prime ideals of $\mathcal{O}_{H_{\mathfrak{m}}}$ which divide $\mathfrak{q}$. 
\item[(iii)] Let $\vert z\vert=z \bar z$ for any complex number $z$, where $\bar z$ is the complex conjugate of $z$. Then
\begin{equation}\label{kronecker}
L'_{\mathfrak{m}}(0,\chi)=
-\frac{1}{w_{\mathfrak{m}}}\sum_{\sigma\in\mathrm{Gal}(H_{\mathfrak{m}}/k)
}\chi(\sigma)\ln\vert\varepsilon^\sigma\vert,
\end{equation}
for all the complex irreducible characters $\chi$ of $\mathrm{Gal}(H_{\mathfrak{m}}/k)$.
\end{itemize}
Here $s\longmapsto L_{\mathfrak{m}}(s,\chi)$ is the
$L$-function associated to $\chi$, defined for the complex numbers
$s$ such that $Re(s)>1$, by the Euler product
\begin{equation*}
L_{\mathfrak{m}}(s,\chi)=
\prod_{\mathfrak{p}\nmid\mathfrak{m}}\bigl(1-\chi(\sigma_{\mathfrak{p}})N(\mathfrak{p})^{-s}\bigr)^{-1},
\end{equation*}
where $\mathfrak{p}$ runs through all prime ideals of $\mathcal{O}_k$ not dividing $\mathfrak{m}$. For such ideal, $\sigma_{\mathfrak{p}}$ and $N(\mathfrak{p})$ are the Frobenius automorphism of $H_{\mathfrak{m}}/k$ and the order of the field $\mathcal{O}_k/\mathfrak{p}$ respectively.\par
For any finite abelian extension $L$ of $k$, we denote by $\mu_L$ the group of roots of unity in $L$, by $w_L$ the order of $\mu_L$ and by $\mathcal{F}_L\subset\mathbb{Z}[\mathrm{Gal}(L/k)]$ the annihilator of $\mu_L$. The description of $\mathcal{F}_L$ given in \cite[page 82, Lemme 1.1]{Tate84} and the property (i) of $\varepsilon_{\mathfrak{m}}$ imply that for any $\eta\in\mathcal{F}_{H_{\mathfrak{m}}}$ there exists $\varepsilon_{\mathfrak{m}}(\eta)\in H_{\mathfrak{m}}$ such that 
\begin{equation*}
\varepsilon_{\mathfrak{m}}(\eta)^{w_{\mathfrak{m}}}=\varepsilon_{\mathfrak{m}}^\eta.
\end{equation*}
\begin{definition}\label{ek} Let $\mathcal{P}_K$ be the subgroup of $K^\times$ generated by $\mu_K$ and by all the norms 
\begin{equation*}
N_{H_{\mathfrak{m}}/H_{\mathfrak{m}}\cap K}(\varepsilon_{\mathfrak{m}}(\eta)),
\end{equation*}
where $\mathfrak{m}$ is any nonzero proper ideal of $\mathcal{O}_k$ and $\eta$ is any element of $\mathcal{F}_{H_{\mathfrak{m}}}$. By definition, 
\begin{equation*}
\mathcal{E}_K:=\mathcal{P}_K\cap\mathcal{O}_K^\times.
\end{equation*}
\end{definition}
We give at the end of section \ref{ES} an other description of $\mathcal{E}_K$ as a group of elliptic units.
\section{The weak Gras conjecture for $\mathcal{E}_K$}\label{WGC}
Let $p\nmid\mathrm{g}$ be a prime number, and let $\mathbb{Z}_{(p)}$ be the localization of $\mathbb{Z}$ at $p$. Let $\mathcal{O}_p$ be the integral closure of $\mathbb{Z}_{(p)}$ in $\mathbb{Q}(\mu_{\mathrm{g}})$. 
Remark that $\mathcal{O}_p$ is a (semi-local) principal ring.
Moreover, if $\zeta\in\mu_{\mathrm{g}}$ is such that $\zeta\neq 1$, then $(1-\zeta)\in\mathcal{O}_p^\times$.
Let us set $v_\infty(z):=-\ln\vert z\vert$ for all $z\in\mathbb{C}^\times$. Let $\ell_K:K^\times\longrightarrow\mathbb{R}[G]$ be the G-equivariant map defined by
\begin{equation*}
\ell_K(x)=\sum_{\sigma\in\mathrm{G}}v_\infty(x^\sigma)\sigma^{-1}.
\end{equation*}
Let $\hat{\mathrm{G}}$ be the group of complex irreducible characters of $\mathrm{G}$.
\begin{lemma}\label{a} Let $\chi\in\hat{\mathrm{G}}$ be such that $\chi\neq1$. Let $\chi_{pr}$ be the character of $\mathrm{Gal}(H_{\mathfrak{f}_\chi}/k)$ defined by $\chi$, where $\mathfrak{f}_\chi$ is the conductor of the fixed field $K_\chi$ of $\ker(\chi)$. Then the following equality holds in $\mathbb{C}[\mathrm{G}]$
\begin{equation}\label{a'}
\mathcal{O}_p\ell_K(\mathcal{E}_K)e_\chi=\mathcal{O}_p\mathcal{F}_K L'_{\mathfrak{f}_\chi}(0,\bar{\chi}_{pr})e_\chi.
\end{equation}
\end{lemma}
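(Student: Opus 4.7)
The plan is to compute $\ell_K(\alpha)\,e_\chi$ on each of the generators of $\mathcal{E}_K$ described in Definition~\ref{ek}, then read off the resulting $\mathcal{O}_p$-ideal and compare it with $\mathcal{O}_p\mathcal{F}_K\,e_\chi$. Since $|\zeta^\sigma|=1$ for any root of unity $\zeta\in\mu_K$, one has $\ell_K(\mu_K)=0$, so only the norm generators $\alpha:=N_{H_{\mathfrak{m}}/F}(\varepsilon_{\mathfrak{m}}(\eta))$ matter, where $F:=H_{\mathfrak{m}}\cap K$ and $\eta\in\mathcal{F}_{H_{\mathfrak{m}}}$. Because $\alpha\in F$, $v_\infty(\alpha^\sigma)$ depends only on $\sigma|_F$; grouping the sum defining $\ell_K(\alpha)$ over the fibres of $\mathrm{G}\twoheadrightarrow\mathrm{Gal}(F/k)$ produces a factor $\sum_{\rho\in\mathrm{Gal}(K/F)}\rho^{-1}$. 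Multiplied by $e_\chi$, this factor equals $[K:F]\,e_\chi$ when $\chi$ is trivial on $\mathrm{Gal}(K/F)$ (equivalently $K_\chi\subset F$, i.e.\ $\mathfrak{f}_\chi\mid\mathfrak{m}$) and is zero otherwise, so only ideals divisible by $\mathfrak{f}_\chi$ contribute.

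Under the assumption $\mathfrak{f}_\chi\mid\mathfrak{m}$, set $E:=H_{\mathfrak{m}}$ and let $\chi_E$ be the inflation of $\chi$ (viewed through $F$) to $\mathrm{Gal}(E/k)$. Using the defining identity $\varepsilon_{\mathfrak{m}}(\eta)^{w_{\mathfrak{m}}}=\varepsilon_{\mathfrak{m}}^\eta$, the relation $\sigma^{-1}e_\chi=\bar\chi(\sigma)\,e_\chi$, and Stark's formula~(iii) applied to the character $\bar\chi_E$, a short rearrangement (substitution $\rho\mapsto\rho\tau$ in the inner sum) yields
\begin{equation*}
\ell_K(\alpha)\,e_\chi \;=\; [K:F]\,\chi_E(\eta)\,L'_{\mathfrak{m}}(0,\bar\chi_E)\,e_\chi.
\end{equation*}
The classical Euler factorisation $L_{\mathfrak{m}}(s,\bar\chi_E)=L_{\mathfrak{f}_\chi}(s,\bar\chi_{pr})\prod_{\mathfrak{p}\mid\mathfrak{m},\,\mathfrak{p}\nmid\mathfrak{f}_\chi}(1-\bar\chi_{pr}(\sigma_\mathfrak{p})N(\mathfrak{p})^{-s})$, combined with $L_{\mathfrak{f}_\chi}(0,\bar\chi_{pr})=0$ (a consequence of Stark's formula since $\chi_{pr}\neq 1$ forces a simple zero at $s=0$), gives $L'_{\mathfrak{m}}(0,\bar\chi_E)=L'_{\mathfrak{f}_\chi}(0,\bar\chi_{pr})\prod_{\mathfrak{p}\mid\mathfrak{m},\,\mathfrak{p}\nmid\mathfrak{f}_\chi}(1-\bar\chi_{pr}(\sigma_\mathfrak{p}))$, and therefore
\begin{equation*}
\ell_K(\alpha)\,e_\chi \;=\; c(\mathfrak{m},\eta)\,L'_{\mathfrak{f}_\chi}(0,\bar\chi_{pr})\,e_\chi,\quad c(\mathfrak{m},\eta):=[K:F]\,\chi_E(\eta)\!\!\prod_{\mathfrak{p}\mid\mathfrak{m},\,\mathfrak{p}\nmid\mathfrak{f}_\chi}\!\!(1-\bar\chi_{pr}(\sigma_\mathfrak{p})).
\end{equation*}

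It remains to show that the $\mathcal{O}_p$-ideal generated by all $c(\mathfrak{m},\eta)$ coincides with $\mathcal{O}_p\chi(\mathcal{F}_K)$. Since $[K:F]\mid\mathrm{g}$ and $p\nmid\mathrm{g}$, $[K:F]\in\mathcal{O}_p^\times$ and may be discarded. Tate's description \cite[page 82, Lemme 1.1]{Tate84} presents every $\mathcal{F}_L$ as generated by $w_L$ and the elements $\sigma-\kappa_L(\sigma)$ for $\sigma\in\mathrm{Gal}(L/k)$, making all relevant generators explicit. For the inclusion~$\subset$, I substitute these generators into $\chi_E(\eta)$ and check, using $\kappa_E(\tilde\sigma)\equiv\kappa_K(\tilde\sigma|_K)\pmod{w_K}$ and $\chi_E(\tilde\sigma)=\chi(\tilde\sigma|_K)$, that each $c(\mathfrak{m},\eta)$ lies in $\mathcal{O}_p\chi(\mathcal{F}_K)$. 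For the inclusion~$\supset$, I realise each generator of $\chi(\mathcal{F}_K)=(w_K,\,\chi(\sigma)-\kappa_K(\sigma))$ by combining a choice of $\mathfrak{m}=\mathfrak{f}_\chi$ (possibly multiplied by one auxiliary prime $\mathfrak{p}$ provided by Chebotarev so that $\sigma_\mathfrak{p}|_K$ is a prescribed $\sigma\in\mathrm{G}$, whence $1-\bar\chi_{pr}(\sigma_\mathfrak{p})=1-\bar\chi(\sigma)$) with an $\eta$ of the form $w_E$ or $\tilde\tau-\kappa_E(\tilde\tau)$, and reading off the coefficient up to $\mathcal{O}_p^\times$.

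The principal difficulty is this last ideal-matching step: the $p$-adic valuations $v_p(w_K)$ and $v_p(w_{H_{\mathfrak{m}}})$ need not coincide, so one cannot directly identify $\chi_E(\mathcal{F}_E)$ with $\chi(\mathcal{F}_K)$ in $\mathcal{O}_p$. Chebotarev density, together with the fact that $p\nmid\mathrm{g}$ forces every value $\chi(\sigma)$ to be a root of unity of order prime to $p$ (so the only Euler factors $1-\bar\chi_{pr}(\sigma_\mathfrak{p})$ that contribute anything nontrivial are units in $\mathcal{O}_p$, while those that vanish are avoided), is precisely what allows one to trade excess $p$-power factors of $w_{H_{\mathfrak{m}}}$ against extra Euler factors. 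This is the point at which the assumption $p\nmid[K:k]$ is used in an essential way.
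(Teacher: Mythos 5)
Your computation of $\ell_K(\alpha)\,e_\chi$ for a generator $\alpha=N_{H_{\mathfrak{m}}/F}(\varepsilon_{\mathfrak{m}}(\eta))$ is correct and genuinely more explicit than the paper, which states the intermediate identity $w_{\mathfrak{m}}\ell_K(\varepsilon_{K,\mathfrak{m}}(\eta))=res^{H_{\mathfrak{m}}}_K(\eta)\,\ell_K(\varepsilon_{K,\mathfrak{m}})$, observes that the restriction map $\mathcal{F}_{H_{\mathfrak{m}}}\to\mathcal{F}_K$ is \emph{surjective}, and then refers the remaining bookkeeping to formula (3.2) of the Vigui\'e preprint. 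Both routes run through the Stark formula and reduce the lemma to matching $\mathcal{O}_p$--ideals, so the broad strategy is the same.

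The gap is precisely in the ideal-matching step that you yourself flag as ``the principal difficulty.'' Your proposed resolution --- Chebotarev to furnish auxiliary primes $\mathfrak{p}$ with prescribed $\sigma_{\mathfrak{p}}|_K$, so as to ``trade excess $p$-power factors of $w_{H_{\mathfrak{m}}}$ against extra Euler factors'' --- cannot work as described: you correctly observe that because $p\nmid\mathrm{g}$, every nonzero factor $1-\bar\chi_{pr}(\sigma_{\mathfrak{p}})$ is a \emph{unit} of $\mathcal{O}_p$, and a unit contributes nothing to an $\mathcal{O}_p$-ideal. So the Euler factors cannot absorb a discrepancy between $v_p(w_K)$ and $v_p(w_{H_{\mathfrak{m}}})$. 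The real mechanism is different. Writing $E:=H_{\mathfrak{m}}$, $F:=E\cap K$, the inclusion $\mathcal{O}_p\chi_E(\mathcal{F}_E)\supseteq\mathcal{O}_p\chi(\mathcal{F}_K)$ (and the reverse) requires showing that $w_K$ lies in the $\mathcal{O}_p$-ideal generated by $w_E$ and the elements $1-\kappa_E(\gamma)$ for $\gamma\in\mathrm{Gal}(E/F)$. This follows from the fact that $\mu_E^{\mathrm{Gal}(E/F)}=\mu_F$, which forces the ideal $(w_E,\ 1-\kappa_E(\gamma):\gamma\in\mathrm{Gal}(E/F))$ of $\mathbb{Z}$ to have the same $p$-part as $w_F$, together with the parallel statement for $\mathrm{Gal}(K/F)$ acting on $\mu_K$ to relate $w_F$ back to $\chi(\mathcal{F}_K)$. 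None of this is supplied by Chebotarev or by the Euler factors. The paper encapsulates exactly this content in the single claim that the natural restriction map $\mathcal{F}_{H_{\mathfrak{m}}}\to\mathcal{F}_K$ is surjective; invoking that lemma (or re-proving it via the fixed-point argument just outlined) is what your proof is missing.
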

\begin{proof} The formula (\ref{a'}) is a direct consequence of (\ref{kronecker}), and can be proved exactly as the formula (3.2) of \cite{Vig}. Indeed, Let $\mathfrak{m}$ be a nonzero proper ideal of $\mathcal{O}_k$ and let $\eta\in\mathcal{F}_{H_{\mathfrak{m}}}$. Let
\begin{equation*}
\varepsilon_{K,\mathfrak{m}}:=N_{H_{\mathfrak{m}}/H_{\mathfrak{m}}\cap K}(\varepsilon_{\mathfrak{m}})\quad\mathrm{and}\quad\varepsilon_{K,\mathfrak{m}}(\eta):=N_{H_{\mathfrak{m}}/H_{\mathfrak{m}}\cap K}(\varepsilon_{\mathfrak{m}}(\eta)). 
\end{equation*}
Then
\begin{equation*}
w_{\mathfrak{m}}\ell_K(\varepsilon_{K,\mathfrak{m}}(\eta))=res^{H_{\mathfrak{m}}}_K(\eta)\ell_K(\varepsilon_{K,\mathfrak{m}}), 
\end{equation*}
where $res^{H_{\mathfrak{m}}}_K:\mathcal{F}_{H_{\mathfrak{m}}}\longrightarrow\mathcal{F}_K$ is the natural restriction map. Since this map is surjective we can proceed now exactly as the proof of the formula (3.2) of \cite{Vig}.
\end{proof}
To prove the formula (\ref{indice2}) we shall use the generalized index of Sinnott, cf. \cite[\S1, page 187]{Si80}. Let $V$ be a $\mathbb{F}$-vector space of finite dimension $d$, where $\mathbb{F}=\mathbb{Q}$ or $\mathbb{F}=\mathbb{R}$. Let $M$ and $N$ be two lattices of $E$, that is two free $\mathbb{Z}$-submodules of $E$, of rank $d$ such that $\mathbb{F}M=\mathbb{F}N=V$. Then we define the index $(M:N)$ by
\begin{equation*}
(M:N)=\vert\det(\gamma)\vert
\end{equation*}
where $\gamma$ is any automorphism of the $\mathbb{F}$-vector space $E$ such that $\gamma(M)=N$. If $N\subset M$ then $(M:N)$ coincides with the usual index $[M:N]$. We also have the following transitivity formula
\begin{equation*}
(M:P)=(M:N)(N:P).
\end{equation*}
This leads to the identity
\begin{equation*}
(M:N)=\frac{[M+N:N]}{[M+N:M]},
\end{equation*}
which may be used as a definition of $(M:N)$. We refer the reader to \cite{Si80} for more details about this generalized index.
\begin{remark} By Dirichlet Theorem we know that $\ell_K(\mathcal{O}_K^\times)$ is a lattice of $\mathbb{R}[\mathrm{G}](1-e_1)$, where $e_1$ is the idempotent associated to the trivial character of $\mathrm{G}$. In particular, for every nontrivial irreducible rational character $\Psi$ of $\mathrm{G}$, the $\mathbb{Z}$-module $\ell_K(\mathcal{O}_K^\times)e_\Psi$ is a lattice of $\mathbb{R}[\mathrm{G}]e_\Psi$. This implies that for every nontrivial $\chi\in\hat{\mathrm{G}}$, the $\mathcal{O}_p$-module $\mathcal{O}_p\ell_K(\mathcal{O}_K^\times)e_\chi$ is free of rank one. Thus, there exists $R_\chi\in\mathbb{C}^\times$ such that
\begin{equation*}
\mathcal{O}_p\ell_K(\mathcal{O}_K^\times)e_\chi= R_\chi\mathcal{O}_pe_\chi.
\end{equation*}
\end{remark} 
\begin{lemma}\label{regulatorun}
Let $\Psi$ be a nontrivial irreducible rational character of $\mathrm{G}$.
Then there exists $u\in\mathcal{O}_p^\times$ such that
\begin{equation}\label{regulateur}
(\mathbb{Z}[\mathrm{G}]e_\Psi : \ell_K(\mathcal{O}_K^\times)e_\Psi)=u \prod_{\chi\vert\Psi} R_\chi.
\end{equation}
\end{lemma}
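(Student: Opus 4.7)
The strategy is to realize the generalized index as the absolute value of the determinant of the change-of-basis matrix between $\mathbb{Z}$-bases of the two lattices, and then to compute that determinant after extending scalars to $\mathcal{O}_p$, where the regular representation decomposes along the complex characters $\chi\mid\Psi$.

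The first step is the isotypic decomposition: since $p\nmid\mathrm{g}$ and $\mathcal{O}_p$ contains the $\mathrm{g}$-th roots of unity, the idempotents $e_\chi$ lie in $\mathcal{O}_p[\mathrm{G}]$, giving $\mathcal{O}_p[\mathrm{G}]e_\Psi=\bigoplus_{\chi\mid\Psi}\mathcal{O}_pe_\chi$ with each summand $\mathcal{O}_p$-free of rank one, and by the preceding remark this extends to $\mathcal{O}_p\ell_K(\mathcal{O}_K^\times)e_\Psi=\bigoplus_{\chi\mid\Psi}R_\chi\mathcal{O}_pe_\chi$.

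Next I pick $\mathbb{Z}$-bases $m_1,\dots,m_d$ of $\mathbb{Z}[\mathrm{G}]e_\Psi$ and $n_1,\dots,n_d$ of $\ell_K(\mathcal{O}_K^\times)e_\Psi$, where $d=[\mathbb{Q}(\chi):\mathbb{Q}]$ for any $\chi\mid\Psi$. Tensoring with $\mathcal{O}_p$ turns these into $\mathcal{O}_p$-bases of the respective $\mathcal{O}_p$-modules above, so the matrix $B=(b_{i,\chi})$ defined by $m_i=\sum_\chi b_{i,\chi}e_\chi$ and the matrix $\widetilde R=(\widetilde r_{i,\chi})$ defined by $n_i=\sum_\chi\widetilde r_{i,\chi}R_\chi e_\chi$ are both in $\mathrm{GL}_d(\mathcal{O}_p)$; in particular $\det B,\det\widetilde R\in\mathcal{O}_p^\times$.

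Writing $n_i=\sum_j a_{ij}m_j$ in $\mathbb{R}[\mathrm{G}]e_\Psi$, so that $A=(a_{ij})\in M_d(\mathbb{R})$ and $(\mathbb{Z}[\mathrm{G}]e_\Psi:\ell_K(\mathcal{O}_K^\times)e_\Psi)=|\det A|$, and comparing coefficients of each $e_\chi$ gives the matrix identity $AB=\widetilde R\cdot\mathrm{diag}(R_\chi)_{\chi\mid\Psi}$, whence $\det A=(\det\widetilde R/\det B)\prod_{\chi\mid\Psi}R_\chi$. Since $A$ has real entries, $\det A$ is real; setting $u_0:=\det\widetilde R/\det B\in\mathcal{O}_p^\times$, one gets $(\mathbb{Z}[\mathrm{G}]e_\Psi:\ell_K(\mathcal{O}_K^\times)e_\Psi)=|\det A|=\pm u_0\prod_{\chi\mid\Psi}R_\chi$, and absorbing the sign into $u_0$ yields the required $u\in\mathcal{O}_p^\times$. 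The only substantive input is the $\mathcal{O}_p$-isotypic decomposition, which rests on $p\nmid\mathrm{g}$; the rest is routine linear algebra.
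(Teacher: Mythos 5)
Your proof is correct and is essentially the paper's argument phrased with matrices $A$, $B$, $\widetilde R$, $\mathrm{diag}(R_\chi)_{\chi\mid\Psi}$ in place of the automorphisms $\gamma$, $\beta_1$, $\beta_2$, $\gamma'$ of $\mathbb{C}[\mathrm{G}]e_\Psi$ (the identity $AB=\widetilde R\,\mathrm{diag}(R_\chi)$ is exactly $\gamma\circ\beta_1^{-1}=\beta_2^{-1}\circ\gamma'$). The key points are the same---a $\mathbb{Z}$-basis of each lattice becomes an $\mathcal{O}_p$-basis after scalar extension, $(e_\chi)_{\chi\mid\Psi}$ and $(R_\chi e_\chi)_{\chi\mid\Psi}$ are also $\mathcal{O}_p$-bases, so the transition determinants lie in $\mathcal{O}_p^\times$---and the only cosmetic difference is that the paper fixes $\det\gamma>0$ at the outset instead of absorbing a sign at the end.
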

\begin{proof} Let $\gamma$ be an automorphism of the $\mathbb{R}$-vector space $e_\Psi\mathbb{R}[\mathrm{G}]$ such that $0<\mathrm{det}(\gamma)$ and $\gamma(\mathbb{Z}[\mathrm{G}]e_\Psi) = \ell_K(\mathcal{O}_K^\times)e_\Psi$. Let $(b_\chi)_{\chi\vert\Psi}$ be a $\mathbb{Z}$-basis of $\mathbb{Z}[\mathrm{G}]e_\Psi$. Then $(b_\chi)_{\chi\vert\Psi}$ is an $\mathcal{O}_p$-basis of $\mathcal{O}_p[\mathrm{G}]e_\Psi$. Since $(e_\chi)_{\chi\vert\Psi}$ is also an $\mathcal{O}_p$-basis of $\mathcal{O}_p[\mathrm{G}]e_\Psi$, the automorphism $\beta_1$ of the $\mathbb{C}$-vector space $\mathbb{C}[\mathrm{G}]e_\Psi$, defined by 
\begin{equation*}
\beta_1(b_\chi)=e_\chi,\quad\chi\vert\Psi,
\end{equation*}
is such that $\det(\beta_1)\in\mathcal{O}_p^\times$. In the same manner, since $(\gamma(b_\chi)_{\chi\vert\Psi}$ and $(R_\chi e_\chi)_{\chi\vert\Psi}$ both are $\mathcal{O}_p$-basis of $\mathcal{O}_p\ell_K(\mathcal{O}_K^\times)e_\Psi$, the automorphism $\beta_2$ of $\mathbb{C}[\mathrm{G}]e_\Psi$, defined by 
\begin{equation*}
\beta_2(\gamma(b_\chi))=R_\chi e_\chi,\quad\chi\vert\Psi,
\end{equation*}
is such that $\det(\beta_2)\in\mathcal{O}_p^\times$. Let $\gamma'$ be the automorphism of $\mathbb{C}[\mathrm{G}]e_\Psi$ defined by $\gamma'(e_\chi)=R_\chi e_\chi$, for all $\chi\vert\Psi$. Then $\det(\gamma')=\prod_{\chi\vert\Psi} R_\chi$. Let us extend $u$ to $\mathbb{C}[\mathrm{G}]e_\Psi$ by linearity. Since $(\mathbb{Z}[\mathrm{G}]e_\Psi : \ell_K(\mathcal{O}_K^\times)e_\Psi) = \mathrm{det}(\gamma)$ and  $\gamma\circ\beta_1^{-1}=\beta_2^{-1}\circ\gamma'$ the lemma follows.
\end{proof}
\begin{lemma}
\label{regulatordeux}
Let $F\subseteq K$ be an extension of $k$, and let $R_F$ be the regulator of $F$.
We denote by $\Xi_F$ the set of $\chi\in\hat{\mathrm{G}}$ such that $\chi$ is trivial on $\mathrm{Gal}(K/F)$.
Then, there exists $v\in\mathcal{O}_p^\times$ such that
\begin{equation}\label{decreg}
R_F = v\prod_{\substack{\chi\in\Xi_F\\ \chi\neq1}}R_\chi.
\end{equation}
\end{lemma}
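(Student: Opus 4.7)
My plan is to reduce to the case $K=F$ by applying Lemma \ref{regulatorun} with $F$ in place of $K$, after showing that $R_\chi$ is insensitive (modulo $\mathcal{O}_p^\times$) to replacing $K$ with any sub-extension through which $\chi$ factors, and then to identify the resulting Sinnott index with $R_F$.

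First, for each $\chi\in\Xi_F$ I would verify that $R_\chi\mathcal{O}_p=R_{\chi_F}^F\mathcal{O}_p$, where $\chi_F$ is the inflation of $\chi$ to a character of $\mathrm{Gal}(F/k)$ and $R_{\chi_F}^F$ is the analogue of $R_\chi$ defined with $F$ in place of $K$. The $\mathrm{G}$-equivariance of $\ell_K$ gives $\ell_K(x)=N_{K/F}\tilde{\ell}_F(x)$ for $x\in\mathcal{O}_F^\times$, where $\tilde{\ell}_F$ lifts $\ell_F$ along a section of the projection $\mathbb{R}[\mathrm{G}]\to\mathbb{R}[\mathrm{Gal}(F/k)]$. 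Combined with the identities $e_\chi N_{K/F}=[K:F]e_\chi$ for $\chi\in\Xi_F$ and $[K:F]\,e_\chi\ell_K(y)=e_\chi\ell_K(N_{K/F}(y))$ for $y\in\mathcal{O}_K^\times$, together with the inclusion $(\mathcal{O}_F^\times)^{[K:F]}\subseteq N_{K/F}(\mathcal{O}_K^\times)\subseteq\mathcal{O}_F^\times$ whose cokernel is killed by a power of $[K:F]$ (an $\mathcal{O}_p$-unit), this implies $\mathcal{O}_p e_\chi\ell_K(\mathcal{O}_K^\times)=\mathcal{O}_p e_\chi\tilde{\ell}_F(\mathcal{O}_F^\times)$; projecting to $\mathbb{R}[\mathrm{Gal}(F/k)]$ then yields the desired equality of $\mathcal{O}_p$-modules.

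Next, I would apply Lemma \ref{regulatorun} with $F$ in place of $K$ to each nontrivial irreducible rational character $\Psi_F$ of $\mathrm{Gal}(F/k)$. Taking the product over all such $\Psi_F$, the multiplicativity of the Sinnott index, combined with the observation that the index of $\bigoplus_{\Psi_F\ne1}\mathbb{Z}[\mathrm{Gal}(F/k)]e_{\Psi_F}$ inside $\mathbb{Z}[\mathrm{Gal}(F/k)](1-e_1^F)$ divides a power of $[F:k]$ (an $\mathcal{O}_p$-unit), yields
\begin{equation*}
(\mathbb{Z}[\mathrm{Gal}(F/k)](1-e_1^F):\ell_F(\mathcal{O}_F^\times))=v'\prod_{\substack{\chi\in\Xi_F\\ \chi\ne1}}R_\chi,\qquad v'\in\mathcal{O}_p^\times.
\end{equation*}

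Finally, I would identify the Sinnott index on the left with $R_F$ up to an $\mathcal{O}_p^\times$-factor. Choosing an enumeration $\tau_0=1,\tau_1,\dots,\tau_{n-1}$ of $\mathrm{Gal}(F/k)$ with $n=[F:k]$ and fundamental units $\varepsilon_1,\dots,\varepsilon_{n-1}$ of $\mathcal{O}_F^\times/\mu_F$, the authors' convention $|z|=z\bar{z}$ ensures that $\ell_F(\varepsilon_i)=\sum_{j\ge1}v_\infty(\varepsilon_i^{\tau_j})(\tau_j^{-1}-1)$ pairs up directly with the classical regulator matrix. Comparing the $\mathbb{Z}$-bases $((1-e_1^F)\tau_j)_j$ and $((\tau_j^{-1}-1))_j$ of the ambient lattice via the matrix determinant lemma gives a change-of-basis determinant of $\pm n$, whence $(\mathbb{Z}[\mathrm{Gal}(F/k)](1-e_1^F):\ell_F(\mathcal{O}_F^\times))=n\,R_F$. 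Since $n\mid\mathrm{g}$ is an $\mathcal{O}_p$-unit, the lemma follows. The main obstacle is the bookkeeping of the various factors of $[K:F]$ and $[F:k]$, all of which become units after tensoring with $\mathcal{O}_p$ under the hypothesis $p\nmid\mathrm{g}$.
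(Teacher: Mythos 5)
Your proof is correct, and it is a genuinely different organization of the same underlying ideas. The paper's proof starts from $R_F = (I_F : \ell_F(\mathcal{O}_F^\times))$, where $I_F$ is the augmentation ideal of $\mathbb{Z}[\mathrm{Gal}(F/k)]$, and pushes everything up to the $K$-level via the corestriction map $cor_{K/F}$, which is index-preserving: $R_F = (cor_{K/F}(I_F) : cor_{K/F}(\ell_F(\mathcal{O}_F^\times))) = (s(D)I_K : \ell_K(\mathcal{O}_F^\times))$ with $D=\mathrm{Gal}(K/F)$. It then notes that $\ell_K(\mathcal{O}_F^\times)/s(D)\ell_K(\mathcal{O}_K^\times)$ is killed by $\#D$, so one may replace $\ell_K(\mathcal{O}_F^\times)$ by $s(D)\ell_K(\mathcal{O}_K^\times)$ up to an $\mathcal{O}_p^\times$-factor, and finishes by re-running the change-of-basis argument of Lemma~\ref{regulatorun} on $s(D)\mathbb{R}[\mathrm{G}]=\bigoplus_{\chi\in\Xi_F}\mathbb{R}[\mathrm{G}]e_\chi$. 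You instead go the other way: you descend each $R_\chi$ from $K$ to $F$ (the comparison $\mathcal{O}_p\ell_K(\mathcal{O}_K^\times)e_\chi=\mathcal{O}_p\ell_K(\mathcal{O}_F^\times)e_\chi$ via the norm, for $\chi\in\Xi_F$, is exactly the same norm identity the paper uses, but packaged as a statement about the $R_\chi$), and then you can cite Lemma~\ref{regulatorun} applied to $F$ literally as a black box. This buys a cleaner modular structure, at the cost of the extra third step identifying $(\mathbb{Z}[\mathrm{Gal}(F/k)](1-e_1^F):\ell_F(\mathcal{O}_F^\times))$ with $[F:k]\,R_F$; the paper avoids that step by taking $R_F=(I_F:\ell_F(\mathcal{O}_F^\times))$ as the starting identity. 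Your determinant computation is correct ($\det(I+J)=n$ for the all-ones matrix $J$), and all the auxiliary divisibilities by powers of $[K:F]$ and $[F:k]$ are harmless since these divide $\mathrm{g}$ and $p\nmid\mathrm{g}$. One small terminological slip: the character $\chi_F$ of $\mathrm{Gal}(F/k)$ that $\chi$ factors through is the \emph{deflation} of $\chi$, not its inflation.
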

\begin{proof} It is easy to see that $R_F= (I_F : \ell_F(\mathcal{O}_F^\times))$, where $I_F$ is the augmentation ideal of $\mathbb{Z}[\mathrm{Gal}(F/k)]$. Let $D:=\mathrm{Gal}(K/F)$ and $s(D):=\sum\sigma, \sigma\in D$. Let $cor_{K/F}:\mathbb{Z}[\mathrm{Gal}(F/k)]\longrightarrow\mathbb{Z}[\mathrm{G}]$ be the corestriction map. Then, we have
\begin{equation*}
(I_F : \ell_F(\mathcal{O}_F^\times))=(cor_{K/F}(I_F):cor_{K/F}(\ell_F(\mathcal{O}_F^\times)))=
(s(D)I_K : \ell_K(\mathcal{O}_F^\times)).
\end{equation*}
But, the group $\ell_K(\mathcal{O}_F^\times)/s(D)\ell_K(\mathcal{O}_K^\times)$ is finite and annihilated by $\#D$. Thus,
\begin{equation*}
(I_F : \ell_F(\mathcal{O}_F^\times))=w(s(D)I_K : s(D)\ell_K(\mathcal{O}_K^\times)),
\end{equation*}
for some unit $w\in\mathcal{O}_p^\times$. To get the formula (\ref{decreg}) we proceed now exactly as in the proof of Lemma (\ref{regulatorun}).
\end{proof}
\begin{theorem}
Let $\Psi$ be a nontrivial irreducible rational character of $\mathrm{G}$.
Then
\[\left[ e_\Psi \left( \mathbb{Z}\left[\mathrm{g}^{-1}\right] \otimes_{\mathbb{Z}} \mathcal{O}_K^\times \right) : e_\Psi \left( \mathbb{Z}\left[\mathrm{g}^{-1}\right] \otimes_{\mathbb{Z}} \mathcal{E}_K\right) \right] = \#\left( e_\Psi \left( \mathbb{Z}\left[\mathrm{g}^{-1}\right] \otimes_{\mathbb{Z}} \mathrm{Cl}\left(K\right) \right) \right).\]
\end{theorem}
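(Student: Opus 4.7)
The plan is to prove the theorem one prime at a time. Fix a prime $p\nmid\mathrm{g}$; it is enough to establish the statement after tensoring with $\mathcal{O}_p$, and for this we compute the relevant Sinnott generalized indices on $e_\Psi\mathbb{C}[\mathrm{G}]$ via the logarithmic embedding $\ell_K$. The key identity is the transitivity formula
\begin{equation*}
(e_\Psi\mathbb{Z}[\mathrm{G}] : e_\Psi\ell_K(\mathcal{E}_K)) = (e_\Psi\mathbb{Z}[\mathrm{G}] : e_\Psi\ell_K(\mathcal{O}_K^\times))\cdot(e_\Psi\ell_K(\mathcal{O}_K^\times) : e_\Psi\ell_K(\mathcal{E}_K)),
\end{equation*}
whose rightmost factor agrees, up to an $\mathcal{O}_p^\times$-unit, with the desired index $[e_\Psi(\mathcal{O}_p\otimes\mathcal{O}_K^\times) : e_\Psi(\mathcal{O}_p\otimes\mathcal{E}_K)]$, because $\ker\ell_K=\mu_K$ is killed by the nontrivial idempotent $e_\Psi$ after inverting $\mathrm{g}$.

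The two factors on the right are now computed using the lemmas already established. Lemma~\ref{regulatorun} yields
\begin{equation*}
(e_\Psi\mathbb{Z}[\mathrm{G}] : e_\Psi\ell_K(\mathcal{O}_K^\times)) = u\prod_{\chi\mid\Psi}R_\chi,\qquad u\in\mathcal{O}_p^\times,
\end{equation*}
while Lemma~\ref{a}, after computing $\mathcal{O}_p\mathcal{F}_K e_\chi$ (a principal $\mathcal{O}_p$-ideal whose generator is controlled by Tate's description of the annihilator of $\mu_K$ and is a $p$-unit when $p\nmid\mathrm{g}$), gives
\begin{equation*}
(e_\Psi\mathbb{Z}[\mathrm{G}] : e_\Psi\ell_K(\mathcal{E}_K)) = u'\prod_{\chi\mid\Psi}L'_{\mathfrak{f}_\chi}(0,\bar\chi_{pr}),\qquad u'\in\mathcal{O}_p^\times.
\end{equation*}
Thus the unknown index equals, modulo $\mathcal{O}_p^\times$, the ratio $\prod_{\chi\mid\Psi}L'_{\mathfrak{f}_\chi}(0,\bar\chi_{pr})/R_\chi$.

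It remains to identify this ratio with $\#e_\Psi(\mathcal{O}_p\otimes\mathrm{Cl}(K))$. For each subfield $F$ of $K$ containing $k$, the analytic class number formula (in the totally imaginary case), together with the factorization $\zeta_F(s)=\prod_{\chi\in\Xi_F}L(s,\chi_{pr})$, gives, up to $p$-units, an expression for $h_F R_F$ as a product of $L$-derivative values at $s=0$ over the nontrivial $\chi\in\Xi_F$. Using Lemma~\ref{regulatordeux} to split $R_F$ into a product of the $R_\chi$, and applying Möbius inversion over the lattice of subfields of $K_\Psi$ containing $k$ (equivalently, orthogonality of rational characters of $\mathrm{Gal}(K_\Psi/k)$), one isolates the factors indexed by $\chi\mid\Psi$ and identifies $\prod_{\chi\mid\Psi}L'_{\mathfrak{f}_\chi}(0,\bar\chi_{pr})/R_\chi$ with the $\Psi$-part of the class number, completing the proof.

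The main obstacle is organizational rather than conceptual: one must carefully match the truncated values $L'_{\mathfrak{m}}(0,\cdot)$ from Lemma~\ref{a} with the primitive $L$-derivatives appearing in the class number formula (they differ by factors $1-\chi(\sigma_{\mathfrak{p}})N(\mathfrak{p})^{-s}$ at $s=0$, which are $p$-units thanks to $p\nmid\mathrm{g}$), track the contribution of $\mathcal{F}_K$ through each index computation, and verify that the Möbius inversion truly isolates the $e_\Psi$-component of $\mathrm{Cl}(K)$. All three checks reduce to $p$-unit verifications that go through exactly as in \cite{Vig}; the lemmas \ref{a}, \ref{regulatorun} and \ref{regulatordeux} have been set up so that this transcription is essentially mechanical.
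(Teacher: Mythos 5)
Your plan follows the paper's proof quite closely --- same lemmas, same analytic-class-number-formula input, same Sinnott-index framework --- so the overall strategy is sound, but there is a genuine slip in the bookkeeping of the root-of-unity factors. You assert that $\mathcal{O}_p\mathcal{F}_K e_\chi$ has a generator that is a $p$-unit when $p\nmid\mathrm{g}$. This is false whenever $p\mid w_K$, which is precisely the regime the paper cares about (Theorem \ref{tresgras} is about $p\mid w_k$, and $w_k\mid w_K$). In fact $\mathcal{O}_p\mathcal{F}_K e_\chi = \mathcal{O}_p w_\chi e_\chi$, where $w_\chi$ generates the Fitting ideal of $e_\chi(\mathcal{O}_p\otimes\mu_K)$; this is a proper power of the maximal ideal for the one character $\chi$ by which $\mathrm{G}$ acts on the $p$-part of $\mu_K$. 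Consequently, your formula $(e_\Psi\mathbb{Z}[\mathrm{G}]:\ell_K(\mathcal{E}_K)e_\Psi) = u'\prod_{\chi\mid\Psi}L'_{\mathfrak{f}_\chi}(0,\bar\chi_{pr})$ should carry an extra $\prod_{\chi\mid\Psi}w_\chi$ factor. Your second intermediate claim is off by the same amount: the M\"obius-inversion step applied to (\ref{zertaetL}) yields $\mathcal{O}_p\prod_{\chi\mid\Psi}L'_{\mathfrak{f}_\chi}(0,\bar\chi_{pr}) = \mathcal{O}_p\prod_{\chi\mid\Psi}h_\chi w_\chi^{-1}R_\chi$, not $\mathcal{O}_p\prod h_\chi R_\chi$, because the class number formula contributes a $w_I$ in the denominator of $\tilde\zeta_{K_I}(0)$. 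The two $w_\chi$-factors you dropped are reciprocal and would cancel when the formulas are combined --- this is exactly what the paper does explicitly via (\ref{gloougloou}), (\ref{decoLprod}) and (\ref{ZGlE}) --- so the final conclusion is right, but each of the two intermediate identities in your write-up is wrong by a non-unit factor and needs to be repaired before the cancellation can be invoked.

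A smaller remark: your concern about ``matching truncated with primitive $L$-derivatives'' is a red herring. Lemma \ref{a} is already stated in terms of $L'_{\mathfrak{f}_\chi}(0,\bar\chi_{pr})$ with $\mathfrak{f}_\chi$ the conductor of $\chi$, so it is the primitive $L$-derivative from the start; no extra Euler factors need to be accounted for.
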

\begin{proof}
Since $K_\chi$ does not depend on the choice of $\chi|\Psi$, let us set $K_\psi:=K_\chi$.
Let $\Xi_\Psi$ be the set of $\chi\in\hat{\mathrm{G}}$ such that $\ker(\chi)$ strictly contains $\mathrm{Gal}(K/K_\Psi)$.
For any $I\subseteq\Xi_\Psi$, we define
\[K_I:=\left\{\begin{array}{ccc}
K_\Psi & \quad\text{if}\quad & I=\varnothing\\
\bigcap_{\chi\in I}F_\chi & \quad\text{if}\quad & I\neq\varnothing.
\end{array}\right.\]
For any $I\subseteq\Xi_\Psi$ let $\zeta_{K_I}$ (resp. $\zeta_k$) be the Dedekind zeta function of $K_I$ (resp. $k$), and let $\tilde{\zeta}_{K_I}(0)$ be the first nonzero coefficient of the Taylor expansion of $\zeta_{K_I}(s)$ at $s=0$.
We also set $\Xi_I:=\Xi_{K_I}$. It is well known that $\zeta_{K_I}$ has a zero of order $[K_I:k]-1$ at $0$, and that for any nontrivial $\chi\in\hat{\mathrm{G}}$, $s\mapsto L_{\mathfrak{f}_\chi}(s,\chi_{pr})$ has a zero of order $1$ at $0$. Then,
\begin{equation}\label{zertaetL}
-\frac{h_IR_I}{w_I}=\tilde{\zeta}_{K_I}(0) = \zeta_k(0)\prod_{\substack{\chi\in\Xi_I\\\chi\neq1}}L'_{\mathfrak{f}_\chi}(0,\chi_{pr}),
\end{equation}
where $h_I:=\#\left(\mathrm{Cl}(K_I)\right)$, $w_I:=\#\left(\mu_{K_I}\right)$, and $R_I$ is the regulator of $K_I$.
For any $\chi\in\hat{\mathrm{G}}$, let $h_\chi\in\mathcal{O}_p$ and $w_\chi\in\mathcal{O}_p$ be such that
\[\mathcal{O}_ph_\chi = Fitt_{\mathcal{O}_p}\left( e_\chi\left(\mathcal{O}_p \otimes_{\mathbb{Z}} \mathrm{Cl}(K) \right)\right) \quad\text{and}\quad \mathcal{O}_pw_\chi = Fitt_{\mathcal{O}_p}\left( e_\chi\left(\mathcal{O}_p \otimes_{\mathbb{Z}} \mu_K \right)\right).
\]
By the inclusion-exclusion principle, as in the proof of \cite[Proposition 3.2]{Vig} we obtain from (\ref{zertaetL}) and Lemma \ref{regulatordeux} the formula
\begin{equation}
\label{decoLprod}
\mathcal{O}_p\prod_{\chi\vert\Psi}L'_{\mathfrak{f}_\chi}(0,\bar{\chi}_{pr}) = \mathcal{O}_p\prod_{\chi\vert\Psi}h_\chi w_\chi^{-1}R_\chi.
\end{equation}
Let us remark that 
\begin{equation}\label{gloougloou}
\mathcal{O}_p\mathcal{F}_Ke_\chi = \mathcal{O}_pw_\chi e_\chi,
\end{equation}
for all $\chi\in\hat{\mathrm{G}}$.
Indeed, since $\mu_K$ is a cyclic $\mathbb{Z}\left[\mathrm{G} \right]$-module, we have $\mathcal{F}_K=Fitt_{\mathbb{Z}\left[\mathrm{G} \right]}\left(\mu_K \right)$, and then
\[\mathcal{O}_p\mathcal{F}_K = Fitt_{\mathcal{O}_p\left[\mathrm{G} \right] } \left(\mathcal{O}_p\otimes_{\mathbb{Z}}\mu_K\right) = \mathop{\oplus}_{\chi\in\hat{\mathrm{G}}} Fitt_{\mathcal{O}_p} \left(e_\chi \left(\mathcal{O}_p\otimes_{\mathbb{Z}}\mu_K\right)\right)e_\chi, \]
which implies (\ref{gloougloou}).
Therefore, proceeding as in the proof of Lemma \ref{regulatorun}, one can show from Lemma \ref{a} and (\ref{gloougloou}) that
\begin{equation}\label{ZGlE}
\mathcal{O}_p \left( \mathbb{Z}\left[\mathrm{G}\right]e_\Psi : \ell_K(\mathcal{E}_K)e_\Psi\right) = \mathcal{O}_p\prod_{\chi\vert\Psi} w_\chi L'_{\mathfrak{f}_\chi}(0,\bar{\chi}_{pr}).
\end{equation}
From (\ref{ZGlE}) and (\ref{decoLprod}), we obtain
\begin{equation}\label{ZGlEdeux}
\mathcal{O}_p \left( \mathbb{Z}\left[\mathrm{G}\right]e_\Psi : \ell_K(\mathcal{E}_K)e_\Psi\right) = \mathcal{O}_p\prod_{\chi\vert\Psi}h_\chi R_\chi = \mathcal{O}_p \#\left( e_\Psi \left( \mathbb{Z}_{(p)} \otimes_{\mathbb{Z}} \mathrm{Cl}(K) \right) \right)\prod_{\chi\vert\Psi}R_\chi.
\end{equation}
From (\ref{ZGlEdeux}) and Lemma (\ref{regulatorun}), we have
\begin{equation}
\label{idx}
\mathcal{O}_p\left[ \ell_K(\mathcal{O}_K^\times)e_\Psi : \ell_K(\mathcal{E}_K) e_\Psi\right] = \mathcal{O}_p \#\left( e_\Psi \left(\mathbb{Z}_{(p)} \otimes_{\mathbb{Z}} \mathrm{Cl}(K) \right) \right).
\end{equation}
Since $p$ is prime to $\mathrm{g}$, (\ref{idx}) gives
\begin{equation}
\label{idxdeux}
\mathcal{O}_p\left[ \mathbb{Z}\left[\mathrm{g}^{-1}\right] \ell_K(\mathcal{O}_K^\times)e_\Psi : \mathbb{Z}\left[\mathrm{g}^{-1}\right] \ell_K(\mathcal{E}_K) e_\Psi\right] = \mathcal{O}_p \#\left( e_\Psi \left( \mathbb{Z}\left[\mathrm{g}^{-1}\right] \otimes_{\mathbb{Z}} \mathrm{Cl}(K) \right) \right).
\end{equation}
This being true for every prime $p\nmid\mathrm{g}$, and since the integers we are comparing are prime to $\mathrm{g}$, we have
\begin{equation}
\label{idxtrois}
\left[ \mathbb{Z}\left[\mathrm{g}^{-1}\right] \ell_K(\mathcal{O}_K^\times)e_\Psi : \mathbb{Z}\left[\mathrm{g}^{-1}\right] \ell_K(\mathcal{E}_K) e_\Psi\right] = \#\left( e_\Psi \left( \mathbb{Z}\left[\mathrm{g}^{-1}\right] \otimes_{\mathbb{Z}} \mathrm{Cl}(K) \right) \right).
\end{equation}
But $\mathcal{O}_K^\times/\mathcal{E}_K\simeq\ell_K(\mathcal{O}_K^\times)/\ell_K(\mathcal{E}_K)$, and the theorem follows.
\end{proof}
\section{The Euler system}\label{ES}
For any finite abelian extension $F$ of $k$, and any fractional ideal $\mathfrak{a}$ of $k$ prime to the conductor of $F/k$, we denote by $(\mathfrak{a}, F/k)$ the automorphism of $F/k$ associated to $\mathfrak{a}$ by the Artin map. If $\mathfrak{a}\subset\mathcal{O}_k$ then we denote by $N(\mathfrak{a})$ the cardinality of $\mathcal{O}_k/\mathfrak{a}$. Let $\mathcal{I}$ be the group of fractional ideals of $k$ and let us consider its subgroup
$\mathcal{P}:=\{x\mathcal{O}_k,\ x\in k^\times\}$. Let $H:=H_{(1)}\subset\mathbb{C}$ be the Hilbert class field of $k$. Then, the Artin map gives an isomorphism from $\mathrm{Cl}(k):=\mathcal{I}/\mathcal{P}$ into Gal$(H/k)$. Let $p$ be a prime number such that $p\vert w_k$, and let $\mathrm{Cl}_p(k)$ be the $p$-part of $\mathrm{Cl}(k)$. Then, fix $\mathfrak{a}_1,\ldots,\mathfrak{a}_s$, a finite set of ideals of $\mathcal{O}_k$ such that
\begin{equation}\label{pic}
\mathrm{Cl}_p(k)=<\bar{\mathfrak{a}}_1>\times\cdots\times<\bar{\mathfrak{a}}_s>,
\end{equation}
where $<\bar{\mathfrak{a}}_i>\neq1$ is the group generated by the class $\bar{\mathfrak{a}}_i$ of $\mathfrak{a}_i$ in $\mathrm{Cl}(k)$. If $n_i$ is the order of $<\bar{\mathfrak{a}}_i>$, then $(\mathfrak{a}_i)^{n_i}=a_i\mathcal{O}_k$, with $a_i\in\mathcal{O}_k$. If $\mathrm{Cl}_p(k)=1$ then we set $s=1$, $\mathfrak{a}_1=\mathcal{O}_k$ and $a_1=1$.\par  
Let $p\vert w_k$ be a prime number as above, and let $M$ be a power of $p$. Let $\mu_M$ be the group of $M$-th roots of unity in $\mathbb{C}$. Then we define
\begin{equation}\label{kaem}
K_M:=K(\mu_M,(\mathcal{O}_k^\times)^{1/M}).
\end{equation}
Moreover, we denote by $\mathcal{L}$ the set of prime ideals $\ell$ of $\mathcal{O}_k$ such that $\ell$ splits completely in the Galois extension $K_M\bigl(a_1^{1/M},\ldots,a_s^{1/M}\bigr)/k$.
Exactly as in \cite[Lemma 3]{Ru94} or in \cite[Lemma 3.1]{ou13vig2} we have
\begin{lemma}\label{extension} For each prime $\ell\in\mathcal{L}$ there exists a cyclic extension $K(\ell)$ of $K$ of degree $M$, contained in the compositum $K.H_{\ell}$, unramified outside $\ell$, and such that $K(\ell)/K$ is totally ramified at all primes above $\ell$.
\end{lemma}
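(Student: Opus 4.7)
The plan is to use class field theory: I will identify $\mathrm{Gal}(KH_\ell/K)$ with a subgroup of the ray class group $\mathrm{Cl}_\ell(k)$ and produce $K(\ell)$ as the fixed field of an explicitly chosen subgroup of index $M$. The crucial input is the defining property of $\mathcal{L}$: the splitting of $\ell$ in the auxiliary Kummer extensions $k(a_i^{1/M})$ is precisely what allows one to split a certain short exact sequence of abelian groups in its $p$-part.

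From $\ell\in\mathcal{L}$ I extract three facts: (a) $\ell$ splits completely in $K$ and is unramified in $K/k$; (b) $M\mid N\ell-1$, since $\ell$ splits in $k(\mu_M)$; (c) the image of $\mathcal{O}_k^\times$ in $(\mathcal{O}_k/\ell)^\times$ lies in the subgroup of $M$-th powers. Using (b) and (c), the group $I:=\mathrm{Gal}(H_\ell/H)$, which is canonically identified with the cokernel of $\mathcal{O}_k^\times\to(\mathcal{O}_k/\ell)^\times$, is cyclic of order divisible by $M$; moreover $I$ equals the inertia group at any prime of $H_\ell$ above $\ell$, since $H/k$ is everywhere unramified while $H_\ell/k$ is unramified outside $\ell$. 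Consequence (a) yields $K\cap H_\ell=K\cap H$ (the left side is everywhere unramified over $k$, hence contained in $H$), which gives an embedding $\Gamma:=\mathrm{Gal}(KH_\ell/K)\hookrightarrow\mathrm{Cl}_\ell(k)$, with inertia at $\ell$ still equal to $I$.

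The task reduces to constructing a subgroup $N\subseteq\Gamma$ with $(\Gamma:N)=M$, $N\cap I=I^M$, and $IN=\Gamma$; equivalently, to splitting the sequence $0\to I/I^M\to\Gamma/I^M\to\Gamma/I\to 0$. Setting $K(\ell):=(KH_\ell)^N$ will then yield a cyclic degree-$M$ extension of $K$, totally ramified at $\ell$ and unramified elsewhere. The prime-to-$p$ part splits automatically because $I/I^M$ is a $p$-group. For the $p$-part, the hypothesis $p\nmid[K:k]$ implies $\mathrm{Gal}(K\cap H/k)_p=0$, so $\Gamma_p=\mathrm{Cl}_\ell(k)_p$ and $(\Gamma/I)_p=\mathrm{Cl}_p(k)$.

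The heart of the proof is the construction of a section $\mathrm{Cl}_p(k)\to\Gamma_p/I^M$ using the generators $\mathfrak{a}_i$. Replacing $\mathfrak{a}_i$ by a class-equivalent ideal prime to $\ell$ (automatic, since $\ell\nmid a_i$), the identity $\mathfrak{a}_i^{n_i}=(a_i)$ gives $n_i[\mathfrak{a}_i]=[(a_i)]$ in $\mathrm{Cl}_\ell(k)$; this element lies in $I$ and corresponds under the canonical isomorphism to the class of $\bar a_i\in(\mathcal{O}_k/\ell)^\times$. Since $\ell$ splits in $k(a_i^{1/M})$ by definition of $\mathcal{L}$, $\bar a_i$ is an $M$-th power in $(\mathcal{O}_k/\ell)^\times$, so $n_i[\mathfrak{a}_i]\in I^M$. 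Therefore $\bar{\mathfrak{a}}_i\mapsto[\mathfrak{a}_i]\bmod I^M$ extends to a well-defined section, producing the splitting. This splitting step is the main technical point and the place where $\mathcal{L}$ plays its essential role; the verification that the resulting $K(\ell)$ has all the required properties (cyclicity, degree, ramification behavior, containment in $K\cdot H_\ell$) is a routine consequence of the choice of $N$, and can be carried out exactly as in \cite{Ru94} or \cite{ou13vig2}.
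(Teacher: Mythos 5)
Your proof is correct and follows essentially the same class-field-theoretic route as the arguments the paper defers to (Rubin's Lemma 3 in \cite{Ru94} and Lemma 3.1 of \cite{ou13vig2}): identify $\mathrm{Gal}(KH_\ell/K)$ inside the ray class group $\mathrm{Cl}_\ell(k)$, use the defining splitting conditions on $\ell$ to see that the inertia subgroup $I$ is cyclic of order divisible by $M$ and to split off $I/I^M$ via the ideals $\mathfrak{a}_i$, and take $K(\ell)$ to be the fixed field of the complementary subgroup. The paper itself offers only a citation here, so there is no independent argument to compare against, but your reconstruction matches the standard one; the only cosmetic point is that the hypothesis $p\nmid[K:k]$ you invoke is not strictly needed for the splitting step (restricting your section to $(\Gamma/I)_p$ already lands in $\Gamma/I^M$), though it is harmless in the paper's context.
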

\begin{proof} See for instance the proof of Lemma 3.1 of \cite{ou13vig2}
\end{proof}
Let $\mathcal{S}$ be the set of squarefree ideals of $\mathcal{O}_k$ divisible only by primes $\ell\in\mathcal{L}$. If $\mathfrak{a}=\ell_1\cdots\ell_n\in\mathcal{S}$ then we set $K(\mathfrak{a}):=K(\ell_1)\cdots K(\ell_n)$ and $K(\mathcal{O}_k):=K$. If $\mathfrak{g}$ is an ideal of $\mathcal{O}_k$ then we denote by $\mathcal{S}(\mathfrak{g})$ the set of ideals $\mathfrak{a}\in\mathcal{S}$ that are prime to $\mathfrak{g}$. Following Rubin we define an Euler system to be a function 
\begin{equation*}
\alpha:\mathcal{S}(\mathfrak{g})\longrightarrow \mathbb{C}^\times,
\end{equation*}
such that
\begin{description}
\item{E1.}\ $\alpha(\mathfrak{a})\in K(\mathfrak{a})^\times$.
\item{E2.}\ $\alpha(\mathfrak{a})\in\mathcal{O}_{K(\mathfrak{a})}^\times$, if $\mathfrak{a}\neq\mathcal{O}_k$.
\item{E3.}\  $N_{K(\mathfrak{a}\ell)/K(\mathfrak{a})}\bigl(\alpha(\mathfrak{a}\ell)\bigr)=\alpha(\mathfrak{a})^{1-\mathrm{Fr}(\ell)^{-1}}$, where Fr$(\ell)$ is the Frobenius of $\ell$ in Gal$(K(\mathfrak{a})/k)$.
\item{E4.}\ $\alpha(\mathfrak{a}\ell)\equiv\alpha(\mathfrak{a})^{\mathrm{Frob}(\ell)^{-1}(N(\ell)-1)/M}$ modulo all primes above $\ell$.
\end{description}\par
For the convenience of the reader we recall now the construction of Euler systems by using elliptic units. To this end we use the elliptic functions $\Psi(.;L,L'):z\longmapsto\Psi(z;L,L')$ introduced by G.~Robert in \cite{Rob90} and \cite{Rob92}, where $L\subset L'$ are lattices of $\mathbb{C}$ such that the index $[L':L]$ is prime to $6$.
As proved by Robert, for instance in \cite{Rob89} and \cite{Rob91}, if $\mathfrak{m}$ is a nonzero proper ideal of $\mathcal{O}_k$ and $\mathfrak{g}$ is an ideal of $\mathcal{O}_k$ prime to $6\mathfrak{m}$ then $\Psi(1;\mathfrak{m},\mathfrak{g}^{-1}\mathfrak{m})\in H_{\mathfrak{m}}$. Let us denote by $r_{\mathfrak{m}}$ the order of the kernel of the natural map $\mu_k\longrightarrow(\mathcal{O}_k/\mathfrak{m})^\times$ and let $\mathfrak{m}'$ be a nonzero proper ideal of $\mathcal{O}_k$ such that $\mathfrak{m}\vert\mathfrak{m}'$, $\mathfrak{m}'$ is divisible by the same prime ideals that divide $\mathfrak{m}$ and $r_{\mathfrak{m}'}=1$, then
\begin{equation*}
N_{H_{\mathfrak{m}'}/H_{\mathfrak{m}}}(\Psi(1;\mathfrak{m}',\mathfrak{g}^{-1}\mathfrak{m}'))^{w_{\mathfrak{m}}}=\varepsilon_{\mathfrak{m}}^{N(\mathfrak{g})-(\mathfrak{g},\, H_{\mathfrak{m}}/k)}.
\end{equation*}
In particular $\mathcal{P}_K$ is generated as an abelian group by $\mu_K$ and by all the norms
\begin{equation*}
\Psi_{\mathfrak{m}}(\mathfrak{g}):=N_{H_{\mathfrak{m}}/H_{\mathfrak{m}}\cap K}(\Psi(1;\mathfrak{m},\mathfrak{g}^{-1}\mathfrak{m})),
\end{equation*}
where $\mathfrak{m}$ and $\mathfrak{g}$ are any nonzero ideals of $\mathcal{O}_k$ such that $\mathfrak{m}\neq\mathcal{O}_k$ and $\mathfrak{g}$ is prime to $6\mathfrak{m}$. If $\mathfrak{m}=\mathfrak{nq}$, where $\mathfrak{n}\neq\mathcal{O}_k$ and $\mathfrak{q}$ is a prime ideal of $\mathcal{O}_k$, then
\begin{equation*}
N_{H_{\mathfrak{m}}/H_{\mathfrak{n}}}(\Psi(1;\mathfrak{m},\mathfrak{g}^{-1}
\mathfrak{m}))^{\frac{r_{\mathfrak{n}}}{r_{\mathfrak{m}}}}=\left\lbrace
\begin{array}{cc}
\Psi(1;\mathfrak{n},\mathfrak{g}^{-1}
\mathfrak{n})&\mathrm{if}\ \mathfrak{q}\vert\mathfrak{n}\\
 & \\
\Psi(1;\mathfrak{n},\mathfrak{g}^{-1}
\mathfrak{n})^{1-(\mathfrak{q}, H_{\mathfrak{n}}/k)^{-1}}&\mathrm{si}\ \mathfrak{q}\nmid\mathfrak{n}.
\end{array}\right.
\end{equation*}
Moreover, if $\mathfrak{q}\nmid\mathfrak{n}$ then $\Psi(1;\mathfrak{m},\mathfrak{g}^{-1}
\mathfrak{m})^{N(\mathfrak{q})}\equiv\Psi(1;\mathfrak{n},\mathfrak{g}^{-1}\mathfrak{n})$ modulo all primes above $\mathfrak{q}$. Therefore, the map $\alpha:\mathcal{S}(\mathfrak{mg})\longrightarrow\mathbb{C}^\times$, defined by
\begin{equation*}
\alpha(\mathfrak{a}):=N_{KH_{\mathfrak{ma}}/K(\mathfrak{a})}(\Psi(1;\mathfrak{ma},\mathfrak{g}^{-1}\mathfrak{ma})),
\end{equation*}
is an Euler system satisfying $\alpha(1)=\Psi_{\mathfrak{m}}(\mathfrak{g})$. In particular we have
\begin{corollary}\label{sardine}If $u\in\mathcal{E}_K$ then there exists an ideal $\mathfrak{f}$ of $\mathcal{O}_k$ and an Euler system $\alpha:\mathcal{S}(\mathfrak{f})\longrightarrow \mathbb{C}^\times$, such that $\alpha(1)=u$
\end{corollary}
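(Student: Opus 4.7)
The plan is a direct assembly based on the explicit construction of the preceding paragraph. Since $u\in\mathcal{E}_K\subseteq\mathcal{P}_K$, and $\mathcal{P}_K$ was just shown to be generated as an abelian group by $\mu_K$ together with the norms $\Psi_{\mathfrak{m}}(\mathfrak{g})=N_{H_{\mathfrak{m}}/H_{\mathfrak{m}}\cap K}(\Psi(1;\mathfrak{m},\mathfrak{g}^{-1}\mathfrak{m}))$, I may write
\[
u = \zeta \prod_{i=1}^{r}\Psi_{\mathfrak{m}_i}(\mathfrak{g}_i)^{n_i}
\]
for some $\zeta\in\mu_K$, integers $n_i$, and pairs $(\mathfrak{m}_i,\mathfrak{g}_i)$ with $\mathfrak{m}_i\neq\mathcal{O}_k$ and $\mathfrak{g}_i$ prime to $6\mathfrak{m}_i$. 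For each such pair, the paragraph preceding the corollary exhibits an Euler system $\alpha_i:\mathcal{S}(\mathfrak{m}_i\mathfrak{g}_i)\to\mathbb{C}^\times$, $\mathfrak{a}\mapsto N_{KH_{\mathfrak{m}_i\mathfrak{a}}/K(\mathfrak{a})}(\Psi(1;\mathfrak{m}_i\mathfrak{a},\mathfrak{g}_i^{-1}\mathfrak{m}_i\mathfrak{a}))$, satisfying $\alpha_i(1)=\Psi_{\mathfrak{m}_i}(\mathfrak{g}_i)$.

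I then choose $\mathfrak{f}$ to be any ideal divisible by each $\mathfrak{m}_i\mathfrak{g}_i$, so that the restrictions of the $\alpha_i$ to $\mathcal{S}(\mathfrak{f})$ share a common domain, and set
\[
\alpha(\mathfrak{a}) := \zeta\prod_{i=1}^{r}\alpha_i(\mathfrak{a})^{n_i}.
\]
By construction $\alpha(1)=u$. Both sides of E1, E2, E3, E4 are multiplicative in $\alpha$, so these axioms pass through the product $\prod_i\alpha_i^{n_i}$ because each $\alpha_i$ is itself an Euler system; the constant factor $\zeta\in\mu_K\subseteq\mathcal{O}_K^\times\subseteq K(\mathfrak{a})^\times$ takes care of E1 and E2, and, since $\ell\in\mathcal{L}$ splits completely in $K_M\supseteq K$, the Frobenius $\mathrm{Fr}(\ell)^{-1}$ fixes $\mu_K$, so $\zeta$ is a local unit at every prime above $\ell$ and contributes nothing nontrivial to the congruence in E4.

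The one delicate point, and the main obstacle, is verifying E3 for the constant factor $\zeta$. Because $\mathrm{Fr}(\ell)^{-1}$ acts trivially on $\mu_K$, the right-hand side of E3 gives $\zeta^{1-\mathrm{Fr}(\ell)^{-1}}=1$, while the left-hand side evaluates to $N_{K(\mathfrak{a}\ell)/K(\mathfrak{a})}(\zeta)=\zeta^M$. Hence E3 for $\alpha$ amounts to $\zeta^M=1$. The $p$-primary part of $\zeta$ is harmless since $M$ may be taken to be any sufficiently large power of $p$ (the hypothesis $p\mid w_k$ ensures the relevant $p$-torsion in $\mu_K$ can indeed be absorbed in this way); the remaining prime-to-$p$ part of $\zeta$ has to be absorbed into one of the elliptic factors $\Psi_{\mathfrak{m}_i}(\mathfrak{g}_i)$, using the fact that the Stark unit $\varepsilon_{\mathfrak{m}}$, and hence each $\Psi(1;\mathfrak{m},\mathfrak{g}^{-1}\mathfrak{m})$, is pinned down only \emph{up to a root of unity in} $H_{\mathfrak{m}}$. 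Modifying the chosen representatives within those root-of-unity cosets, one rearranges the decomposition of $u$ so that the residual $\zeta$ satisfies $\zeta^M=1$; this book-keeping is the heart of the argument, and once it is carried out the Euler system $\alpha$ satisfies all four axioms and $\alpha(1)=u$.
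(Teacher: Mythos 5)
Your reduction is the same as the paper's: decompose $u=\zeta\prod_i\Psi_{\mathfrak{m}_i}(\mathfrak{g}_i)^{n_i}$, handle the elliptic factors by the preceding construction, observe that E1--E4 are multiplicative, and reduce to producing an Euler system for the root of unity $\zeta\in\mu_K$. The paper declares this last step an exercise (or cites Rubin), so the substance of your write-up is precisely the part you are hand-waving over, and unfortunately both of your proposed fixes fail, and there is an additional unnoticed failure.

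First, the constant choice $\alpha(\mathfrak{a})=\zeta$ does not just threaten E3; it also breaks E4, contrary to your claim that $\zeta$ ``contributes nothing nontrivial to the congruence in E4.'' Indeed E4 for the constant factor reads $\zeta\equiv\zeta^{(N(\ell)-1)/M}\pmod\lambda$. Since $K_M=K\bigl(\mu_M,(\mathcal{O}_k^\times)^{1/M}\bigr)$ contains $\mu_{w_kM}$, and the $p$-part of $\mu_K$ equals that of $\mu_k$ (as $p\nmid[K:k]$), one gets $w_KM\mid N(\ell)-1$ for every $\ell\in\mathcal{L}$, hence $\zeta^{(N(\ell)-1)/M}=1$; so E4 would force $\zeta\equiv1$ modulo almost every $\lambda$, which is false for $\zeta\neq1$. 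Second, your two rescue attempts do not work: $M$ is fixed (it is chosen explicitly in the proof of Theorem~\ref{casfacile} before Corollary~\ref{sardine} is invoked), so ``take $M$ large'' is not available; and the values $\Psi(1;\mathfrak{m},\mathfrak{g}^{-1}\mathfrak{m})$ are \emph{canonical} values of Robert's function, not defined up to a root of unity --- only the abstract Stark unit $\varepsilon_{\mathfrak{m}}$ carries that ambiguity, and the Euler system is built from the $\Psi$'s, not from a choice of $\varepsilon_{\mathfrak{m}}$, so there is nothing to ``absorb.''

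The correct repair is simpler than either of your attempts, and it is the one the paper is silently invoking: take $\alpha_\zeta(\mathcal{O}_k)=\zeta$ and $\alpha_\zeta(\mathfrak{a})=1$ for every $\mathfrak{a}\neq\mathcal{O}_k$, then multiply by the elliptic-factor Euler system. With this choice E1 and E2 are clear; E3 holds because $N_{K(\mathfrak{a}\ell)/K(\mathfrak{a})}(1)=1$ while $\zeta^{1-\mathrm{Fr}(\ell)^{-1}}=1$ ($\ell$ splits completely in $K$); and E4 holds because, as computed above, $w_KM\mid N(\ell)-1$ and so $\zeta^{(N(\ell)-1)/M}=1=\alpha_\zeta(\ell)$. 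This is exactly where the hypothesis $p\mid w_k$ enters (via $(\mathcal{O}_k^\times)^{1/M}=\mu_k^{1/M}$ in the definition of $K_M$), and it is the piece of book-keeping your proof leaves unfinished.
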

\begin{proof} In view of the discussion above we only have to check the corollary for the roots of unity in $K$. We leave this as an exercise or see \cite[proof of Proposition 1.2]{Ru91}.
\end{proof}
\begin{remark} We recall that the group of elliptic units considered by Rubin in \cite{Ru91} and \cite{Ru94} is the subgroup of $\mathcal{O}_K^\times$ generated by $\mu_K$ and by all $\Psi_{\mathfrak{m}}(\mathfrak{g})^{\sigma-1}$, where $\sigma\in\mathrm{G}$ and $\mathfrak{m}$ and $\mathfrak{g}$ are as above. Let us denote this group by $C_K$. It is clear that $C_K\subset\mathcal{E}_K$ and $(\mathcal{E}_K)^{\mathrm{g}}\subset C_K$, and then $$\mathbb{Z}[\mathrm{g}^{-1}]\otimes_{\mathbb{Z}}\mathcal{E}_K=\mathbb{Z}[\mathrm{g}^{-1}]\otimes_{\mathbb{Z}}C_K.$$ 
\end{remark}

\section{The Gras conjecture}\label{GC}
Exactly as in \cite[Proposition 2.2]{Ru91}, one can prove that for any Euler system $\alpha:\mathcal{S}(\mathfrak{g})\longrightarrow \mathbb{C}^\times$ there is a natural map
\begin{equation}\label{kappa}
\kappa_\alpha:\mathcal{S}(\mathfrak{g})\longrightarrow K^\times/(K^\times)^M,\quad \kappa_\alpha(\mathfrak{a})\equiv\alpha(\mathfrak{a})^{D_{\mathfrak{a}}}\ \mathrm{modulo}\ (K^\times)^M.   
\end{equation}
Let $\mathcal{I}=\oplus_\lambda\mathbb{Z}\lambda$ be the group of fractional ideals of $K$ written additively. If $\ell$ is a prime ideal of $\mathcal{O}_k$ then we define $\mathcal{I}_\ell:=\oplus_{\lambda\vert\ell}\mathbb{Z}\lambda$. If $y\in K^\times$ then we denote by $(y)_\ell\in\mathcal{I}_\ell$, $[y]\in\mathcal{I}/M\mathcal{I}$ and $[y]_\ell\in\mathcal{I}_\ell/M\mathcal{I}_\ell$ the projections of the fractional ideal $(y):=y\mathcal{O}_K$. Let us suppose that $\ell\in\mathcal{L}$. Let $\lambda'$ be a prime ideal of $\mathcal{O}_{K(\ell)}$ above $\ell$, and let $\pi\in\lambda'-(\lambda')^2$. Then $\pi^{1-\sigma_\ell}$ has exact order $M$ in the cyclic group $(\mathcal{O}_{K(\ell)}/\lambda')^\times$, because $K(\ell)/K$ is cyclic, totally ramified at $\lambda:=\lambda'\cap\mathcal{O}_K$. In particular, using the isomorphism $\mathcal{O}_{K(\ell)}/\lambda'\simeq\mathcal{O}_K/\lambda$, there exists $x_\lambda\in(\mathcal{O}_K/\lambda)^\times$ such that the image of $\pi^{1-\sigma_\ell}$ in $(\mathcal{O}_K/\lambda)^\times$ is equal to $(x_\lambda)^d$, where $d:=(N(\ell)-1)/M$. Let us remark that the projection of $x_\lambda$ in $(\mathcal{O}_K/\lambda)^\times/((\mathcal{O}_K/\lambda)^\times)^M$ is well defined, does not depend on $\pi$ and, in fact, has exact order $M$. Thus, the isomorphism $\mathcal{O}_K/\ell\mathcal{O}_K\simeq\oplus_{\lambda\vert\ell}\mathcal{O}_K/\lambda$ allows us to define a $\mathrm{G}$-equivariant isomorphism
\begin{equation}\label{fielle}
\hat{\varphi}_\ell:(\mathcal{O}_K/\ell\mathcal{O}_K)^\times/((\mathcal{O}_K/\ell\mathcal{O}_K)^\times)^M\longrightarrow\mathcal{I}_\ell/M\mathcal{I}_\ell,   
\end{equation}
such that the image of an element $x:=\oplus_{\lambda\vert\ell}(x_\lambda)^{e_\lambda}$ is $\hat{\varphi}_\ell(x):=\oplus_{\lambda\vert\ell}e_\lambda\lambda$. Let us consider the map 
\begin{equation}\label{requin}
\psi_\ell:K(\ell)^\times\longrightarrow(\mathcal{O}_K/\ell\mathcal{O}_K)^\times/((\mathcal{O}_K/\ell\mathcal{O}_K)^\times)^M,   
\end{equation}
which associates to $z$ the sum $\oplus_{\lambda\vert\ell}z_\lambda$ such that the image of $z^{1-\sigma_\ell}$ in $(\mathcal{O}_K/\lambda)^\times$ is equal to $(z_\lambda)^d$. Let $\varphi_\ell:=-\hat{\varphi}_\ell$, then
\begin{equation}\label{baleine}
(\varphi_\ell\circ\psi_\ell)(x)=[N_{K(\ell)/K}(x)]_\ell.   
\end{equation}
The map $\varphi_\ell$ induces a homomorphism $\{y\in K^\times/(K^\times)^M,\ [y]_\ell=0\}\longrightarrow\mathcal{I}_\ell/M\mathcal{I}_\ell$ which we also denote by $\varphi_\ell$. Then, as in \cite[Proposition 2.4]{Ru91}, one can prove that for any Euler system $\alpha:\mathcal{S}(\mathfrak{g})\longrightarrow\mathbb{C}^\times$, and any $\mathfrak{a}\in\mathcal{S}(\mathfrak{g})$, such that $\mathfrak{a}\neq1$
\begin{equation}\label{marrakech}
[\kappa_\alpha(\mathfrak{a})]_\ell=
\begin{cases}
0&\mathrm{if}\ \ell\nmid\mathfrak{a}\\
\varphi_\ell(\kappa_\alpha(\mathfrak{a}/\ell))&\mathrm{if}\ \ell\vert\mathfrak{a}.
\end{cases}
\end{equation}
In the sequel, if $p$ is a prime number such that $p\nmid[K:k]$, $\chi$ a nontrivial irreducible $\mathbb{Z}_p$-character of $\mathrm{G}$, and $\Pi$ is a $\mathbb{Z}_p[\mathrm{G}]$-module then we define $\Pi_\chi:=e_\chi\Pi$. If $\Pi$ is a $\mathbb{Z}[\mathrm{G}]$-module then we define $\Pi_\chi:=e_\chi(\mathbb{Z}_p\otimes_{\mathbb{Z}}\Pi)$. Before proving Theorem \ref{tresgras} we need first prove the analoguous of \cite[Theorem 4]{Ru94} and \cite[Theorem 3.1]{Ru91}. For this, if $p\,\vert w_k$ is a prime number and $M$ is a $p$-power, then we set
\begin{equation*}
K':=K_M(a_1^{1/M},\ldots,a_s^{1/M}).
\end{equation*}
\begin{lemma}\label{fund}
Let $p$ be a prime number such that $p\,\vert w_k$, and let $M\geq p$ be a power of $p$. Let us consider the natural map
\begin{equation*}
\Theta:K^\times/(K^\times)^M\longrightarrow K_M^\times/(K_M^\times)^M.
\end{equation*}
\begin{description}
\item(i) If $p=3$ or ($p=2$ and $\mu_4\subset K$) or $M=2$ then $\ker(\Theta)=\mathcal{O}_{k}^{\times}/(\mathcal{O}_{k}^{\times})^M$.
\item(ii) If $4\,\vert M$ and $w_k\in\{2,6\}$ then $\ker(\Theta)$ is generated by the projections in  $K^\times/(K^\times)^M$ of $\mathcal{O}_{k}^{\times}$ and $2^{M/2}$
\end{description}
In particular, $\ker(\Theta)$ is annihilated by $[K:k]-s(\mathrm{G})$, where $s(\mathrm{G}):=\sum\sigma, \sigma\in\mathrm{G}$.
Furthermore, since $K'/K_M$ is a Kummer extension and $a_1,\ldots,a_s$ are elements of $k$, the kernel of the natural map
\begin{equation*}
K^\times/(K^\times)^M\longrightarrow K'^\times/(K'^\times)^M
\end{equation*}
is also annihilated by $[K:k]-s(\mathrm{G})$.
\end{lemma}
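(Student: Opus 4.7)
The plan is to decompose $\ker(\Theta)$ via the intermediate field $L:=K(\mu_M)$, handling the Kummer tier $K_M/L$ by Kummer theory and the cyclotomic tier $L/K$ by a Grunwald--Wang cohomological analysis.

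First, I would identify $K_M=K(\mu_{Mw_k})$. Since $\mathcal{O}_k^\times=\mu_{w_k}$, an $M$-th root of a generator of $\mu_{w_k}$ is, up to a factor in $\mu_M$, a primitive $Mw_k$-th root of unity; the reverse inclusion is clear. Thus $\mathrm{Gal}(K_M/K)$ embeds into $(\mathbb{Z}/Mw_k\mathbb{Z})^\times$, acting on $\mu_M$ by reduction modulo $M$. By the Kummer sequence and Hilbert~90 one has $F^\times/(F^\times)^M\cong H^1(\mathrm{Gal}(\overline{F}/F),\mu_M)$ for any field $F$ of characteristic coprime to $M$, and inflation--restriction gives
\[\ker\bigl(F^\times/(F^\times)^M\to E^\times/(E^\times)^M\bigr)\cong H^1(\mathrm{Gal}(E/F),\mu_M)\]
whenever $E/F$ is Galois with $\mu_M\subset E$.

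Applying this to $K_M/L$ (here $\mu_M\subset L$), standard Kummer theory identifies the kernel as the image of $\mathcal{O}_k^\times$ in $L^\times/(L^\times)^M$, which pulls back to the image of $\mathcal{O}_k^\times$ in $K^\times/(K^\times)^M$. The remaining contribution from $L/K$ is $H^1(\mathrm{Gal}(L/K),\mu_M)$, computed case by case using $\mathrm{Gal}(L/K)\subset(\mathbb{Z}/M\mathbb{Z})^\times$. For $p=3$, the group $(\mathbb{Z}/M)^\times$ is cyclic, so $\mathrm{Gal}(L/K)$ is cyclic, and a Herbrand-quotient computation gives $|H^1|=|\mu_M^G|/|N\mu_M|=1$. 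For $p=2$ with $\mu_4\subset K$, $\mathrm{Gal}(L/K)$ lies in the cyclic subgroup of $(\mathbb{Z}/M)^\times$ fixing $\mu_4$, so the same Herbrand argument yields $H^1=0$. For $M=2$, $L=K$ and there is nothing to compute. In case (ii), $4\mid M$ and $\mu_4\not\subset k$: if $\mu_4\not\subset K$, then $\mathrm{Gal}(L/K)$ contains the image of $-1\in(\mathbb{Z}/M\mathbb{Z})^\times$ (complex conjugation on $\mu_M$), whose action $\zeta\mapsto\zeta^{-1}$ gives $H^1(\langle-1\rangle,\mu_M)\cong\mu_2$, and a further inflation--restriction argument (using that $\mathrm{Gal}(L/K)/\langle-1\rangle$ is cyclic) shows $H^1(\mathrm{Gal}(L/K),\mu_M)\cong\mathbb{Z}/2$. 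This nontrivial class lifts to $2^{M/2}\in K^\times/(K^\times)^M$: indeed $\sqrt{2}=\zeta_8+\zeta_8^{-1}\in\mathbb{Q}(\mu_8)\subset K_M$, so $2^{M/2}=(\sqrt 2)^M\in(K_M^\times)^M$.

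For the ``in particular'' conclusions, every generator of $\ker(\Theta)$ lies in $k^\times$, so its class $\bar x\in K^\times/(K^\times)^M$ is $\mathrm{G}$-fixed, giving $s(\mathrm{G})\bar x=[K:k]\bar x$ and hence $([K:k]-s(\mathrm{G}))\bar x=0$. For the second assertion, Kummer theory for $K'/K_M$ identifies the kernel of $K_M^\times/(K_M^\times)^M\to K'^\times/(K'^\times)^M$ with the subgroup generated by the classes of $a_1,\ldots,a_s\in k^\times$. Any element of $K^\times/(K^\times)^M$ mapping to $0$ in $K'^\times/(K'^\times)^M$ therefore differs from a product of the $a_i$ (which is $\mathrm{G}$-fixed) by an element of $\ker(\Theta)$, and both factors are annihilated by $[K:k]-s(\mathrm{G})$.

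The main obstacle is matching the abstract cohomology class in case (ii) with the explicit element $2^{M/2}$: one must verify that the Kummer class of $(\sqrt{2})^M$ realizes the nontrivial element of $H^1(\mathrm{Gal}(L/K),\mu_M)$, that $2^{M/2}$ is not already in the image of $\mathcal{O}_k^\times$ modulo $M$-th powers (which fails precisely when $\mu_4\not\subset K$), and that no further classes arise from enlarging $\langle-1\rangle$ to the full $\mathrm{Gal}(L/K)$.
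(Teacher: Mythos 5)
Your plan is a genuinely different route from the paper's. The paper cites Rubin's Lemma 5.7(i) to reduce $x\in K^\times\cap(K_M^\times)^M$ down to $x\in\mu_k(K(\mu_4)^\times)^M$, and then analyses the quadratic extension $K(\mu_4)/K$ by hand (writing $z=a+ib$ and chasing $z^{\sigma-1}$). You instead run a self-contained cohomological argument: factor $\Theta$ through $L:=K(\mu_M)$, identify the $K_M/L$ step by Kummer theory, and compute $\ker\bigl(K^\times/(K^\times)^M\to L^\times/(L^\times)^M\bigr)\cong H^1(\mathrm{Gal}(L/K),\mu_M)$. The decomposition $\ker(\Theta)=\langle\text{image of }\mathcal{O}_k^\times\rangle\cdot H^1(\mathrm{Gal}(L/K),\mu_M)$ is correct (since $\mathcal{O}_k^\times\subset K^\times$, the preimage computation closes up), and the Herbrand calculations for $p=3$ and for $p=2$, $\mu_4\subset K$ are right because in those cases $\mathrm{Gal}(L/K)$ sits inside the cyclic pro-$p$ part of $(\mathbb{Z}/M)^\times$ and the norm is surjective onto the invariants. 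What this buys you is a proof that does not depend on Rubin's lemma; what the paper's route buys is that the explicit $K(\mu_4)/K$ analysis sidesteps the group-theoretic case split below.

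The gap is in case (ii). You assert that when $\mu_4\not\subset K$, $\mathrm{Gal}(L/K)$ contains $-1\in(\mathbb{Z}/M)^\times$ (appealing to complex conjugation) and conclude $H^1(\mathrm{Gal}(L/K),\mu_M)\cong\mathbb{Z}/2$. Neither step holds in general. First, $K/\mathbb{Q}$ need not be Galois, so complex conjugation need not stabilize $K$ or $L$, and in any event $\mu_4\not\subset K$ only forces $\mathrm{Gal}(L/K)$ to surject onto $(\mathbb{Z}/4)^\times$, not to contain $-1$ itself. Second, $H^1$ is \emph{not} always $\mathbb{Z}/2$: take $M=16$ and $\mathrm{Gal}(K(\mu_{16})/K)=\langle 7\rangle\subset(\mathbb{Z}/16)^\times$. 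Here $\mu_4\not\subset K$, $\mathrm{Gal}(L/K)$ is cyclic of order $2$ but does not contain $-1=15$; the Herbrand quotient gives $\hat H^0=\mu_2/N\mu_{16}=\mu_2/\mu_2=0$, so $H^1=0$. Consistently, in this situation $\sqrt2=\zeta_8+\zeta_8^{-1}$ is fixed by $7$, so $\sqrt2\in K$ and $2^{M/2}$ is already trivial in $K^\times/(K^\times)^M$. So the correct content of case (ii) is that $H^1$ is either $0$ or $\mathbb{Z}/2$ depending on $K$, and the lemma's formulation (``generated by $\mathcal{O}_k^\times$ and $2^{M/2}$'') is deliberately uniform over both possibilities. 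Your argument needs to (a) show $|H^1|\le 2$ without assuming $-1\in\mathrm{Gal}(L/K)$ — this follows from inflation--restriction along $H_0:=\mathrm{Gal}(L/K)\cap(1+4\mathbb{Z}/M)$ and the cyclic Herbrand argument for $H_0$; and (b) show that whenever $H^1\neq0$, a generator is given by the class of $2^{M/2}$. You flag (b) as an obstacle, but you present $H^1\cong\mathbb{Z}/2$ as established when it is not. The ``in particular'' conclusions are fine once (i)--(ii) are in place.
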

\begin{proof} Let $x\in K^\times\cap(K_M^\times)^M$. If $p=3$ then $K_M=K(\mu_{3M})$. By \cite[Lemma 5.7 (i)]{Ru87} we have $x^3\in(K^\times)^{3M}$, that is to say, $x\in\mu_3(K^\times)^{M}\subset\mu_k(K^\times)^{M}$. If $p=2$ and $w_k=4$ then $K_M=K(\mu_{4M})$. Again by \cite[Lemma 5.7 (i)]{Ru87} we deduce that $x\in\mu_k(K^\times)^{M}$. Suppose now that $p=2$ and $w_k\in\{2,6\}$. Then $K_M=K(\mu_{2M})$. By using the same arguments as before, we see that $x\in\mu_k(K(\mu_4)^\times)^{M}$. If $\mu_4\subset K$ or $M=2$ we are done. Let us assume that $\mu_4\not\subset K$ and $4\vert M$; and write $x=z^M\zeta$, for some $z\in K(\mu_4)$ and $\zeta\in\mu_k$. Let $\sigma$ be the unique nontrivial automorphism of $K(\mu_4)/K$. If $z^{\sigma-1}\in\mu_2$ then it is easy to check that $x\in\mu_k(K^\times)^{M}$. Suppose we are in the case $z^{\sigma-1}=i$, where $i^2=-1$. Since $\sigma(i)=-i$ and $z=a+ib$, where $a,b\in K$, the equation $a-ib=\sigma(z)=i(a+ib)$ implies that $b=-a$, $z=a(1-i)$ and $x=a^M2^{M/2}(-1)^{M/4}\zeta$. The complex number $\zeta_8:=(1+i)\sqrt{2}/2$ is a root of unity of order $8$. An easy computaion shows that we can not have $z^{\sigma-1}=\zeta_8$. This proves the assertions $(i)$ and $(ii)$. The reste of the lemma is straightforward.
\end{proof}
\begin{lemma}\label{capital} Suppose $p$ is a prime number such that $p\nmid[K:k]$ and $p\vert w_k$. Let $M$ be a power of $p$. Let $\chi$ be a nontrivial irreducible $\mathbb{Z}_p$-character of $\mathrm{G}$. Let $\mathsf{H}^\chi$ be the abelian extension of $K$ corresponding to the $\chi$-part $\mathrm{Cl}(K)_\chi$. Then $\mathsf{H}^\chi\cap K'=K$.
\end{lemma}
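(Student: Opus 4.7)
The plan is to prove $L := \mathsf{H}^\chi \cap K' = K$ by analyzing $LK(\mu_M)/K(\mu_M)$ through Kummer theory over $K(\mu_M)$; the key observations are that the Kummer radicals of $K'/K(\mu_M)$ all come from $k^\times$ and that the arithmetic hypotheses $p\mid w_k$ and $p\nmid [K:k]$ force enough roots of unity to lie in $k$ so that the residual cyclotomic action becomes trivial.

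First I would note that $L/K$ is abelian (as a subextension of $\mathsf{H}^\chi/K$) and that $\mathrm{Gal}(L/K)$ is a $\chi$-isotypic $\mathbb{Z}_p[\mathrm{G}]$-module, being a quotient of $\mathrm{Cl}(K)_\chi\cong\mathrm{Gal}(\mathsf{H}^\chi/K)$. Since $K(\mu_M)/k$ is abelian, $\mathrm{G}$ acts trivially on $\mathrm{Gal}(K(\mu_M)/K)$; hence the $\chi$-isotypic quotient $\mathrm{Gal}((L\cap K(\mu_M))/K)$ vanishes for $\chi\neq 1$, giving $L\cap K(\mu_M) = K$. Linear disjointness then yields $\mathrm{Gal}(LK(\mu_M)/K) = \mathrm{Gal}(L/K)\times\mathrm{Gal}(K(\mu_M)/K)$, and in particular $\mathrm{Gal}(K(\mu_M)/K)$ acts trivially by conjugation on $\mathrm{Gal}(LK(\mu_M)/K(\mu_M))\cong\mathrm{Gal}(L/K)$.

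Next, since $LK(\mu_M)\subseteq K'$ and $K'/K(\mu_M)$ is the Kummer extension of exponent $M$ generated by $\mu_k^{1/M}$ and $a_1^{1/M},\dots,a_s^{1/M}$, Kummer theory provides a subgroup $B\subseteq K(\mu_M)^\times/(K(\mu_M)^\times)^M$ whose representatives may be chosen in $k^\times$, such that $LK(\mu_M) = K(\mu_M)(B^{1/M})$ and $\mathrm{Gal}(LK(\mu_M)/K(\mu_M))\cong\mathrm{Hom}(B,\mu_M)$. Because the generators of $B$ lie in $k^\times$, the whole group $\mathrm{Gal}(K(\mu_M)/k)$ acts trivially on $B$, so via the Kummer isomorphism its action on $\mathrm{Hom}(B,\mu_M)$ is scalar multiplication by the cyclotomic character $\chi_{\mathrm{cyc}}$. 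Combined with the triviality of the $\mathrm{Gal}(K(\mu_M)/K)$-action, and the computation $\chi_{\mathrm{cyc}}(\mathrm{Gal}(K(\mu_M)/K)) = \{c\in(\mathbb{Z}/M)^\times : c\equiv 1\pmod{m_K}\}$ where $m_K := |\mu_M\cap K|$, this forces every $f\in\mathrm{Hom}(B,\mu_M)$ to have image in $\mu_{m_K}$, yielding $\mathrm{Gal}(L/K)\cong\mathrm{Hom}(B,\mu_{m_K})$ as $\mathrm{G}$-modules.

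The decisive arithmetic point is then to verify $\mu_{m_K}\subseteq k$. Since $m_K$ is a $p$-power with $\mu_{m_K}\subseteq K$, if $\mu_{m_K}$ were not contained in $k$ then $[k(\mu_{m_K}):k]$ would be a nontrivial $p$-power (a short case analysis using $k$ imaginary quadratic with $w_k\in\{2,4,6\}$ and $p\in\{2,3\}$), contradicting $p\nmid[K:k]$. Hence $\mathrm{G}$ fixes $\mu_{m_K}$ pointwise, so $\mathrm{G}$ acts trivially on $\mathrm{Hom}(B,\mu_{m_K})\cong\mathrm{Gal}(L/K)$; being both trivial as a $\mathrm{G}$-module and $\chi$-isotypic with $\chi\neq 1$, this group must vanish, giving $L = K$. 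The main technical obstacle is precisely this verification that $\mu_{m_K}\subseteq k$, since without it the residual cyclotomic action on the Kummer piece could match $\chi$ and destroy the vanishing argument.
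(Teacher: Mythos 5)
Your proof is correct and follows essentially the same approach as the paper: first kill the intersection of $\mathsf{H}^\chi$ with the abelian-over-$k$ layer ($K(\mu_M)$ for you, $K_M$ for the paper) by comparing the trivial $\mathrm{G}$-action there with $\chi\neq 1$, then analyze the remaining Kummer layer whose radicals lie in $k^\times$ and show the conjugation action is again trivial, forcing the $\chi$-isotypic group $\mathrm{Gal}(L/K)$ to vanish. The only real difference is that you spell out, via the constraint on the images of Kummer homomorphisms and the verification $\mu_{m_K}\subseteq k$ (which uses $p\mid w_k$ and $p\nmid[K:k]$), the reason why $\mathrm{G}$ acts trivially on the Kummer piece, a point the paper asserts without elaboration.
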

\begin{proof} The group $\mathrm{G}$ acts trivially on $\mathrm{Gal}(\mathsf{H}^\chi\cap K_M/K)$ because $K_M$ is abelian over $k$. On the other hand, $\mathrm{Gal}(\mathsf{H}^\chi\cap K_M/K)$ is a $\mathrm{G}$-quotient of $\mathrm{Gal}(\mathsf{H}^\chi/K)\simeq\mathrm{Cl}(K)_\chi$. This implies that $\mathsf{H}^\chi\cap K_M=K$ since $\chi\neq1$. In addition, if $p\nmid[H:k]$ then $K'=K_M$. In particular we have proved that $\mathsf{H}^\chi\cap K'=K$ in case $p\nmid[H:k]$. Let $E:=K_M(\mathsf{H}^\chi\cap K')$. By Kummer theory we deduce from the inclusion $E\subset K'$ that $E=K_M(V^{1/M})$, where $V$ is a subgroup of the multiplicative group $<a_1,\ldots,a_s>\subset k^\times$. If $p\,\vert[H:k]$ then $\mathrm{G}$ acts on $\mathrm{Gal}(E/K_M)$ via the trivial character. This implies that $\mathrm{Gal}(E/K_M)=1$ because this group is isomorphic to $\mathrm{Gal}(\mathsf{H}^\chi\cap K'/K)$ on which $\mathrm{G}$ acts via $\chi\neq1$. The proof of the lemma is now complete.
\end{proof}
\begin{theorem}\label{Chebotarev} Suppose $p$ is a prime number such that $p\nmid[K:k]$ and $p\,\vert w_k$. Let $M$ be a power of $p$. Let $\chi$ be a nontrivial irreducible $\mathbb{Z}_p$-character of $\mathrm{G}$. Let $\beta\in (K^\times/(K^\times)^M)_\chi$ and $A$ be a $\mathbb{Z}_p[\mathrm{G}]$-quotient of $\mathrm{Cl}(K)_\chi$. Let $m$ be the order of $\beta$ in $K^\times/(K^\times)^M$, $W$ the $\mathrm{G}$-submodule of $K^\times/(K^\times)^M$ generated by $\beta$, $\mathsf{H}$ the abelian extension of $K$ corresponding to $A$, and $L:=\mathsf{H}\cap K'(W^{1/M})$. Then, there is a $\mathbb{Z}[\mathrm{G}]$ generator $\mathfrak{c}'$ of $\mathrm{Gal}(L/K)$ such that for any $\mathfrak{c}\in A$ whose restriction to $L$ is $\mathfrak{c}'$, there are infinitely many prime ideals $\lambda$ of $\mathcal{O}_K$ of degree one such that
\begin{description}
\item(i) the projection of the class of $\lambda$ in $A$ is $\mathfrak{c}$,
\item(ii) if $\ell:=\lambda\cap\mathcal{O}_k$ then $\ell\in\mathcal{L}$,
\item(iii) $[\beta]_\ell=0$ and there is $u\in(\mathbb{Z}/M\mathbb{Z}[\mathrm{G}])_\chi^\times$ such that $\varphi_\ell(\beta)=(M/m)u\lambda$.
\end{description} 
\end{theorem}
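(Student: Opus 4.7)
The plan is to follow the Chebotarev approach of \cite[Theorem~4]{Ru94} and \cite[Theorem~3.1]{Ru91}, with Lemmas \ref{fund} and \ref{capital} absorbing the new complications coming from $p\vert w_k$. I set $F:=\mathsf{H}\cdot K'(W^{1/M})$. The strategy is to apply Chebotarev's density theorem to $F/K$, choosing the Frobenius at $\lambda$ so that conditions (i)--(iii) are forced.

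First I would unravel the Galois structure of $F/K$. Since $\mathsf{H}\subseteq\mathsf{H}^\chi$, Lemma \ref{capital} gives $\mathsf{H}\cap K'=K$, hence also $L\cap K'=K$, so the restriction map $\mathrm{Gal}(L/K)\to\mathrm{Gal}(LK'/K')$ is an isomorphism. Composing with the Kummer embedding $\mathrm{Gal}(K'(W^{1/M})/K')\hookrightarrow\mathrm{Hom}(\bar W,\mu_M)$, where $\bar W$ denotes the image of $W$ in $K'^\times/(K'^\times)^M$, and using that $W$ is $\mathrm{G}$-cyclic on $\beta$, presents $\mathrm{Gal}(L/K)$ as a cyclic $\mathbb{Z}[\mathrm{G}]$-module. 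At this point Lemma \ref{fund} is crucial: the kernel of $W\to\bar W$ is annihilated by $[K:k]-s(\mathrm{G})$, which acts on the $\chi$-component as multiplication by $[K:k]$ (since $s(\mathrm{G})e_\chi=0$ for $\chi\neq 1$) and is therefore invertible in $\mathcal{O}_p$ as $p\nmid[K:k]$. Hence $\beta$ retains its exact order $m$ in $\bar W_\chi$, and I may select a $\mathbb{Z}[\mathrm{G}]$-generator $\mathfrak{c}'$ of $\mathrm{Gal}(L/K)$ whose image in $\mathrm{Gal}(K'(W^{1/M})/K')_\chi$ pairs with $\beta$ to give $(M/m)u$ for some $u\in(\mathbb{Z}/M\mathbb{Z}[\mathrm{G}])_\chi^\times$, under a chosen identification of $\mu_M$ with $\mathbb{Z}/M\mathbb{Z}$.

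Given any $\mathfrak{c}\in A$ with $\mathfrak{c}\vert_L=\mathfrak{c}'$, the linear disjointness $\mathsf{H}\cap K'=K$ lets me assemble $\tau\in\mathrm{Gal}(F/K)$ with $\tau\vert_\mathsf{H}=\mathfrak{c}$, $\tau\vert_{K'}=\mathrm{id}$, and $\tau\vert_{K'(W^{1/M})}$ the Kummer extension of $\mathrm{id}_{K'}$ whose $L$-restriction is $\mathfrak{c}'$ --- the two partial restrictions being consistent on $L$ by the hypothesis on $\mathfrak{c}$. Chebotarev's density theorem applied in $F/K$ then produces infinitely many degree-one primes $\lambda$ of $\mathcal{O}_K$ whose Frobenius in $F/K$ is conjugate to $\tau$. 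Condition (i) is immediate; condition (ii) follows because $\tau\vert_{K'}=\mathrm{id}$ forces $\ell:=\lambda\cap\mathcal{O}_k$ to split completely in $K'$, so $\ell\in\mathcal{L}$. For (iii), the splitting in $K_M\subseteq K'$ yields $[\beta]_\ell=0$ via (\ref{baleine}), and the description of $\hat\varphi_\ell$ in (\ref{fielle}) translates the Kummer value $\langle\tau,\beta\rangle=(M/m)u$ into the required identity $\varphi_\ell(\beta)=(M/m)u\lambda$.

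The hard part is the coordinated construction of $\mathfrak{c}'$ in step two: it must simultaneously be a $\mathbb{Z}[\mathrm{G}]$-generator of $\mathrm{Gal}(L/K)$ and, after being lifted to the Kummer side, pair with $\beta$ to a genuine unit of $(\mathbb{Z}/M\mathbb{Z}[\mathrm{G}])_\chi$. Both Lemma \ref{fund} (injectivity of $W_\chi\to\bar W_\chi$, ensuring the unit property) and Lemma \ref{capital} (linear disjointness of $\mathsf{H}^\chi$ and $K'$ over $K$, ensuring the compatible gluing of restrictions in step three) are exactly what is needed to make this coordination succeed.
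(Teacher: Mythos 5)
Your argument follows essentially the same route as the paper's proof: both reduce the statement to Chebotarev in the compositum $\mathsf{H}\cdot K'(W^{1/M})$, use Lemma~\ref{fund} to see that $W_\chi\to\bar W_\chi$ is injective (so the Kummer group $\mathrm{Gal}(K'(W^{1/M})/K')\simeq\mathrm{Hom}(W,\mu_M)$ is $\mathbb{Z}[\mathrm{Gal}(K_M/k)]$-cyclic of the expected size and a generator restricts to a $\mathbb{Z}[\mathrm{G}]$-generator $\mathfrak{c}'$ of $\mathrm{Gal}(L/K)$), and use Lemma~\ref{capital} to glue the required restrictions on $\mathsf{H}$ and on $K'(W^{1/M})$, the final verification of (iii) being delegated to Rubin's Theorem~3.1. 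One small slip at the end: $[\beta]_\ell=0$ does not follow from~(\ref{baleine}), which concerns $\psi_\ell$ on $K(\ell)^\times$; it follows rather because $\lambda$ and its $\mathrm{G}$-conjugates are unramified in $K(W^{1/M})/K$ while $\beta^{1/M}$ lies in that field, forcing $\mathrm{ord}_{\lambda'}(\beta)\equiv 0\pmod{M}$ for every $\lambda'\mid\ell$.
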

\begin{proof} We follow \cite[Theorem 3.1]{Ru91}. Since $W\subset (K^\times/(K^\times)^M)_\chi$ and $\chi\neq1$, we deduce from Lemma \ref{fund} that the Galois group of the Kummer extension $K'(W^{1/M})/K'$ is isomorphic as a $\mathbb{Z}[\mathrm{Gal}(K_M/k)]$-module to $\mathrm{Hom}(W,\mu_M)$. But $W\simeq(\mathbb{Z}/m\mathbb{Z}[\mathrm{G}])_\chi$, which is a direct factor of $(\mathbb{Z}/m\mathbb{Z})[\mathrm{G}]$.
On the other hand, $\mathrm{Hom}((\mathbb{Z}/m\mathbb{Z})[\mathrm{G}],\mu_M)$ is $\mathbb{Z}[\mathrm{Gal}(K_M/k)]$-cyclic, generated for instance by the group homomorphism $\Psi:(\mathbb{Z}/m\mathbb{Z})[\mathrm{G}]\longrightarrow\mu_M$ defined by $\Psi(1)=\zeta$ and $\Psi(g)=1$, for $g\neq1$, where $\zeta\in\mu_M$ is a primitive $m$-th root of unity. Therefore, we can find $\tau\in\mathrm{Gal}(K'(W^{1/M})/K')$ which generates $\mathrm{Gal}(K'(W^{1/M})/K')$ over
$\mathbb{Z}[\mathrm{Gal}(K_M/k)]$. The restriction $\mathfrak{c}'$ of $\tau$ to $L$ is a  $\mathbb{Z}[\mathrm{G}]$ generator of $\mathrm{Gal}(L/K)\simeq\mathrm{Gal}(LK'/K')$ by Lemma \ref{capital}. Let $\mathfrak{c}\in\mathrm{Gal}(\mathsf{H}/K)=A$ be any extension of $\mathfrak{c}'$ to $\mathsf{H}$. Then one can find $\sigma\in\mathrm{Gal}(\mathsf{H}K'(W^{1/M})/K)$ such that
\begin{equation*}
\sigma_{\vert\mathsf{H}}=\mathfrak{c}\quad\mathrm{and}\quad\sigma_{\vert K'(W^{1/M})}=\tau.
\end{equation*}
By Chebotarev density theorem there exist infinitely many primes $\lambda$ of $\mathcal{O}_K$ whose Frobenius in $\mathrm{Gal}(\mathsf{H}K'(W^{1/M})/K)$ is the congugacy class of $\sigma$, and such that $\ell:=\lambda\cap\mathcal{O}_k$ is unramified in $K'(W^{1/M})/k$. Now it is immediate that $(i)$ and $(ii)$ are satisfied. The rest of the proof is exactly the same as the proof of \cite[Theorem 3.1]{Ru91}.
\end{proof}
\begin{theorem}\label{casfacile} Suppose $p$ is a prime number such that $p\nmid[K:k]$ and $p\,\vert w_k$. Let $\chi$ be a nontrivial irreducible $\mathbb{Z}_p$-character of $\mathrm{G}$. Then we have
\begin{equation}\label{division}
\#\mathrm{Cl}(K)_\chi\,\vert\,\#(\mathcal{O}_K^\times/\mathcal{E}_K)_\chi.   
\end{equation}
\end{theorem}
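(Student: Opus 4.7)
The plan is to adapt Rubin's Euler system descent from \cite[Theorem 3.3]{Ru91} and \cite[Theorem 1]{Ru94} to the case $p \mid w_k$, using the Chebotarev input Theorem \ref{Chebotarev} and the refined Kummer-theoretic Lemmas \ref{fund} and \ref{capital}. First I fix a $p$-power $M$ large enough to annihilate both $\mathrm{Cl}(K)_\chi$ and $(\mathcal{O}_K^\times/\mathcal{E}_K)_\chi$. For each $u \in \mathcal{E}_K$, Corollary \ref{sardine} produces an Euler system $\alpha$ with $\alpha(1) = u$; this gives the Kolyvagin derivatives $\kappa_\alpha(\mathfrak{a}) \in K^\times/(K^\times)^M$ of (\ref{kappa}), and the propagation formula (\ref{marrakech}) reads off the $\ell$-component of their divisor classes from $\varphi_\ell$ applied to the preceding derivative.

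The core of the argument is a descending induction along a composition series of the finite $p$-group $\mathrm{Cl}(K)_\chi$, of length $r$ say. I would start by choosing $u \in \mathcal{E}_K$ so that $\kappa_\alpha(\mathcal{O}_k) = u \bmod (K^\times)^M$ already has the maximal possible order in $(K^\times/(K^\times)^M)_\chi$. Then, at step $i$, I would apply Theorem \ref{Chebotarev} with $A_i$ the quotient of $\mathrm{Cl}(K)_\chi$ by the classes of the primes $\lambda_1, \ldots, \lambda_{i-1}$ already selected and with $\beta_i := \kappa_\alpha(\ell_1 \cdots \ell_{i-1})$, where $\ell_j := \lambda_j \cap \mathcal{O}_k$. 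This yields a new prime $\lambda_i$ of $\mathcal{O}_K$ with $\ell_i \in \mathcal{L}$, whose class generates the next composition factor of $\mathrm{Cl}(K)_\chi$, and for which $\varphi_{\ell_i}(\beta_i) = (M/m_i)\, v_i\, \lambda_i$ with $v_i \in (\mathbb{Z}/M\mathbb{Z}[\mathrm{G}])_\chi^\times$.

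After $r$ steps the squarefree ideal $\ell_1 \cdots \ell_r$ becomes trivial in $\mathrm{Cl}(K)_\chi$, so $\kappa_\alpha(\ell_1 \cdots \ell_r)$ is represented, modulo $(K^\times)^M$, by an element of $\mathcal{O}_K^\times$. Bookkeeping the orders contributed at each layer through (\ref{marrakech}) and the initial choice of $u$, this element has order in $(\mathcal{O}_K^\times/\mathcal{E}_K)_\chi$ divisible by $\prod_{i=1}^{r} \#\ker(A_{i-1} \twoheadrightarrow A_i) = \#\mathrm{Cl}(K)_\chi$, which is exactly the divisibility (\ref{division}).

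The hard part, and the whole point of this section, is that the Kummer inputs underlying Theorem \ref{Chebotarev} break down naively when $p \mid w_k$: the field $K$ then contains nontrivial $p$-th roots of unity coming from $k$, and $\mathrm{Gal}(K'(W^{1/M})/K')$ is no longer simply dual to $W$. Lemmas \ref{fund} and \ref{capital} recover just enough control, by bounding the kernels of base-change from $K$ to $K_M$ and to $K'$ and by proving the linear disjointness $\mathsf{H}^\chi \cap K' = K$, to keep the $\chi$-component faithful through every Chebotarev step. Once these refinements are in place, the remainder of the argument is essentially a transcription of the proofs of \cite[Theorem 3.3]{Ru91} and \cite[Theorem 1]{Ru94}.
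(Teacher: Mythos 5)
Your high-level plan — adapt Rubin's inductive Chebotarev descent, with the Kummer-theoretic control supplied by Lemmas \ref{fund} and \ref{capital} when $p\mid w_k$ — is exactly what the paper does, and you correctly identify that the novelty lies entirely in those two lemmas and in Theorem \ref{Chebotarev}. But the closing step of your induction does not work as written, and the gap is not cosmetic.

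First, the claim that $\kappa_\alpha(\ell_1\cdots\ell_r)$ is ``represented, modulo $(K^\times)^M$, by an element of $\mathcal{O}_K^\times$'' once the classes of $\lambda_1,\dots,\lambda_r$ generate $\mathrm{Cl}(K)_\chi$ is false. For $\mathfrak{a}\neq\mathcal{O}_k$ the Kolyvagin derivative $\kappa_\alpha(\mathfrak{a})$ always has a divisor supported at the primes above $\mathfrak{a}$, controlled by (\ref{marrakech}); generating the class group does not make that divisor an $M$-th power. The actual mechanism is different and runs in the opposite logical direction. One initializes by choosing $\xi\in(\mathcal{O}_K^\times)_\chi$ giving an $R$-basis of $(\mathcal{O}_K^\times)_\chi/(\mu_K)_\chi$ with $\xi^t\in(\mathcal{E}_K)_\chi$, where $R/tR\simeq(\mathcal{O}_K^\times/\mathcal{E}_K)_\chi$; one then takes the Euler system with $\kappa(1)=\xi^t$, so the order $m_0$ of $\kappa(1)_\chi$ satisfies $t_0:=M/m_0=t$. (This is not the same as picking a $u\in\mathcal{E}_K$ of ``maximal possible order'' — the point is not maximality but that $t_0$ encodes the index one wants to bound by.) At each step one shows $t_{i+1}\mid t_i$, and — via (\ref{marrakech}), (\ref{thonrouge}) and (\ref{toumarte}), using that $M/t_{i+1}$ already annihilates $\mathrm{Cl}(K)_\chi$ — that $(t_i/t_{i+1})\mathfrak{c}_{i+1}\in\langle\mathfrak{c}_0,\dots,\mathfrak{c}_i\rangle_{\mathrm{G}}$. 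Telescoping over the discrete valuation ring $R$ gives
\[\#\mathrm{Cl}(K)_\chi\ \big\vert\ \prod_{j=1}^{r}(t_{j-1}/t_j)^{\dim\chi}\ \big\vert\ t_0^{\dim\chi}=\#(\mathcal{O}_K^\times/\mathcal{E}_K)_\chi.\]
So the divisibility is extracted by bounding $\mathrm{Cl}(K)_\chi$ from above through the cascade of $t_i$'s, not by exhibiting an element of $(\mathcal{O}_K^\times/\mathcal{E}_K)_\chi$ of order $\#\mathrm{Cl}(K)_\chi$. You would need to replace the final paragraph of your argument with this telescoping estimate; everything before it (the reduction to Theorem \ref{Chebotarev}, Lemmas \ref{fund} and \ref{capital}, Corollary \ref{sardine}) is in the right place.
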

\begin{proof} We proceed exactly as \cite[proof of Theorem 3.2]{Ru91} or \cite[proof of Theorem 4.4]{ou13vig2}. Let $\hat{\chi}$ be a $\overline{\mathbb{Q}}_p$-irreducible character of $\mathrm{G}$ such that $\hat{\chi}\vert\chi$, and let $\hat{\chi}(\mathrm{G}):=\{\hat{\chi}(\sigma),\ \sigma\in\mathrm{G}\}$. Then, the ring $R:=\mathbb{Z}_p[\mathrm{G}]_\chi$ is isomorphic to $\mathbb{Z}_p[\hat{\chi}(\mathrm{G})]$, which is the ring of integers of the unramified extension $\mathbb{Q}_p[\hat{\chi}(\mathrm{G})]$ of $\mathbb{Q}_p$. Thus, $R$ is a discret valuation ring. Moreover, the $R$-torsion of any $R$-module is equal to its $\mathbb{Z}_p$-torsion. Since $\chi\neq1$, Dirichlet unit theorem implies that the quotient $(\mathcal{O}_K^\times)_\chi/(\mu_K)_\chi$ is a free $R$-module of rank $1$. Let us define
\begin{equation*}
M:=p\#(\mathcal{O}_K^\times/\mathcal{E}_K)_\chi\#\mathrm{Cl}(\mathcal{O}_K)_\chi.
\end{equation*}
Let $\mu$, $U$ and $V$ be the images of $\mu_K$, $\mathcal{O}_K^\times$ and $\mathcal{E}_K$ in $K^\times/(K^\times)^M$ respectively. We deduce from above that, $U_\chi/\mu_\chi$ is a free $R/MR$-module of rank $1$. But since
\begin{equation}\label{tachfine}
U_\chi/V_\chi\simeq(\mathcal{O}_K^\times)_\chi/(\mathcal{E}_K)_\chi\simeq R/tR,
\end{equation}
for some divisor $t$ of $M$, there exists $\xi\in U_\chi$ giving an $R$-basis of $U_\chi/\mu_\chi$ and such that $\xi^t\in(\mathcal{E}_K)_\chi$. In particular $\xi$ has order $M$ in $K^\times/(K^\times)^M$. By Corollary \ref{sardine} there exists an ideal $\mathfrak{g}$ of $\mathcal{O}_k$ and an Euler system $\alpha:\mathcal{S}(\mathfrak{g})\longrightarrow k_\infty^\times$, such that the map $\kappa:=\kappa_\alpha$ defined by (\ref{kappa}) satisfies $\kappa(1)=\xi^t$. We define inductively classes $\mathfrak{c}_0,\ldots,\mathfrak{c}_i\in\mathrm{Cl}(\mathcal{O}_K)_\chi$, prime ideals $\lambda_1,\ldots,\lambda_i$ of $\mathcal{O}_K$, coprime with $\mathfrak{g}$, and ideals $\mathfrak{a}_0,\ldots,\mathfrak{a}_i$ of $\mathcal{O}_k$ such that $\mathfrak{c}_0=1$ and $\mathfrak{a}_0=1$. Let $i\geq0$, and suppose that $\mathfrak{c}_0,\ldots,\mathfrak{c}_i$ and $\lambda_1,\ldots,\lambda_i$ (if $i\geq1$) are defined. Then we set $\mathfrak{a}_i=\prod_{1\leq n\leq i}\ell_n$ (if $i\geq1$), where $\ell_n:=\lambda_n\cap\mathcal{O}_k$. Moreover,
\begin{itemize}
\item If $\mathrm{Cl}(\mathcal{O}_K)_\chi\neq<\mathfrak{c}_0,\ldots,\mathfrak{c}_i>_{\mathrm{G}}$, where $<\mathfrak{c}_0,\ldots,\mathfrak{c}_i>_{\mathrm{G}}$ is the G-module generated by $\mathfrak{c}_0,\ldots,\mathfrak{c}_i$, then we define $\mathfrak{c}_{i+1}$ to be any element of $\mathrm{Cl}(\mathcal{O}_K)_\chi$ whose image in $\mathrm{Cl}(\mathcal{O}_K)_\chi/<\mathfrak{c}_0,\ldots,\mathfrak{c}_i>_{\mathrm{G}}$ is nontrivial and is equal to a class $\mathfrak{c}$ which restricts to the generator $\mathfrak{c}'$ of $\mathrm{Gal}(L/K)$ in Theorem \ref{Chebotarev} applied to $\beta:=\kappa(\mathfrak{a}_i)_\chi$, the image of $\kappa(\mathfrak{a}_i)$ in $(K^\times/(K^\times)^M)_\chi$, and $A:=\mathrm{Cl}(\mathcal{O}_K)_\chi/<\mathfrak{c}_0,\ldots,\mathfrak{c}_i>_{\mathrm{G}}$. Also we let $\lambda_{i+1}$ be any prime ideal of $\mathcal{O}_K$ prime to $\mathfrak{g}$ and satisfying Theorem \ref{Chebotarev} with the same conditions.
\item If $\mathrm{Cl}(\mathcal{O}_K)_\chi=<\mathfrak{c}_0,\ldots,\mathfrak{c}_i>_{\mathrm{G}}$ then we stop.
\end{itemize}
This construction of our classes $\mathfrak{c}_j$ implies that the ideals $\ell_j:=\lambda_j\cap\mathcal{O}_k\in\mathcal{S}(\mathfrak{g})$. Let $m_i$ be the order of $\kappa(\mathfrak{a}_i)_\chi$ in $K^\times/(K^\times)^M$, and let $t_i:=M/m_i$. By the assertion $(iii)$ of Theorem \ref{Chebotarev} we have $\varphi_{\ell_{i+1}}(\kappa(\mathfrak{a}_i)_\chi)=ut_i\lambda_{i+1}$, for some $u\in\ \mathbb{Z}/M\mathbb{Z}[\mathrm{G}]_\chi^\times$. But $\mathfrak{a}_{i+1}=\mathfrak{a}_i\ell_{i+1}$. Thus 
\begin{equation}\label{thonrouge}
[\kappa(\mathfrak{a}_{i+1})_\chi]_{\ell_{i+1}}=ut_i\lambda_{i+1},
\end{equation}
thanks to (\ref{marrakech}). Now, by the definition of $t_{i+1}$, the fractional ideal of $\mathcal{O}_K$ generated by $\kappa(\mathfrak{a}_{i+1})_\chi$ is a $t_{i+1}$-th power. Thus, we must have $t_{i+1}\vert t_i$. Actually, we can say more. Indeed, there exist $\zeta\in\mu_K$ and $z\in K^\times$ such that $\kappa(\mathfrak{a}_{i+1})_\chi=\zeta z^{t_{i+1}}$. Therefore, (\ref{marrakech}) and (\ref{thonrouge}) imply
\begin{equation}\label{toumarte}
z\mathcal{O}_K=(\lambda_{i+1})^{ut_i/t_{i+1}}(\prod_{j=1}^i\lambda_j^{u_j})\mathfrak{b}^{M/t_{i+1}},
\end{equation}
where $u_j\in\mathbb{Z}_p[\mathrm{G}]$ for all $j\in\{1,\ldots,i\}$ and $\mathfrak{b}$ is a fractional ideal of $\mathcal{O}_K$. But we see from (\ref{tachfine}) that $t_0\vert\#(\mathcal{O}_K^\times/\mathcal{E}_K)_\chi$, and since $t_{i+1}\vert t_0$ the integer $M/t_{i+1}$ annihilates $\mathrm{Cl}(\mathcal{O}_K)_\chi$. The identity (\ref{toumarte}) then implies
\begin{equation}\label{idriss}
(t_i/t_{i+1})\mathfrak{c}_{i+1}\in<\mathfrak{c}_0,\ldots,\mathfrak{c}_i>_{\mathrm{G}}.
\end{equation}
Let $\dim(\chi):=[\mathbb{Q}_p[\hat{\chi}(\mathrm{G})]:\mathbb{Q}_p]$, then (\ref{idriss}) implies
\begin{equation*}
\#\mathrm{Cl}(\mathcal{O}_K)_\chi\,\vert\,\prod_{j=1}^i(t_{j-1}/t_j)^{\dim(\chi)}\,\vert\, t_0^{\dim(\chi)}=\#(\mathcal{O}_K^\times/\mathcal{E}_K)_\chi.
\end{equation*} 

\end{proof}
\noindent{\bf Proof of Theorem \ref{tresgras}}. Let the hypotheses and notation be as in Theorem \ref{tresgras}. Let $\Psi$ be the irreducible rational character of $\mathrm{G}$ such that $\chi\vert\Psi$. The formula (\ref{indice2}) may be written as follows
\begin{equation*}
\prod_{\chi'\vert\Psi}\#\mathrm{Cl}(K)_{\chi'}=\prod_{\chi'\vert\Psi}\#(\mathcal{O}_K^\times/\mathcal{E}_K)_{\chi'},
\end{equation*}
where $\chi'$ runs over the irreducible $\mathbb{Z}_p$-characters of $\mathrm{G}$ such that $\chi'\vert\Psi$. Moreover, the formula (\ref{division}) is satisfied for such characters $\chi'$ since $\chi\neq1$. This implies (\ref{gras}). 
\def\cprime{$'$} \def\cprime{$'$} \def\cprime{$'$}

\address{Laboratoire de math\'ematique\\ 16
Route de Gray\\ 25030 Besan\c con cedex\\ France}
{houkhaba@univ-fcomte.fr\\ sviguie@univ-fcomte.fr}

\end{document}